\newtheorem{thm}{Theorem}[section]
\newtheorem{thmz}{Theorem}              % numbered in introduction
\newtheorem{prop}[thm]{Proposition}
\newtheorem{propz}{Proposition}         % numbered in introduction
\newtheorem{cor}[thm]{Corollary}
\theoremstyle{definition}
\newtheorem{defn}[thm]{Definition}
\newtheorem{ex}[thm]{Example}
\theoremstyle{remark}
\newtheorem{rem}[thm]{Remark}
\renewcommand{\d}{\mathrm d}
\newcommand{\eps}{\varepsilon}
\newcommand{\X}{\ensuremath{\mathfrak{X}}}
\renewcommand{\d}{\mathrm d}               % differential
\newcommand{\Lie}{\boldsymbol{\pounds}}    % Lie derivative
\DeclareMathOperator{\Ker}{Ker}         % Kernel
\DeclareMathOperator{\red}{red}       % reduced
\DeclareMathOperator{\graf}{graph}
\newcommand{\al}{\alpha}
\newcommand{\be}{\beta}
\newcommand{\G}{\mathcal{G}}            % Lie groupoid
\newcommand{\s}{\mathbf{s}}             % source map
\renewcommand{\t}{\mathbf{t}}           % target map
\newcommand{\m}{\mathbf{m}}             % groupoid multiplication
\renewcommand{\gg}{\mathfrak{g}}        % Lie algebra
\newcommand{\hh}{\mathfrak{h}}          % Lie subalgebra
\newcommand{\tto}{\rightrightarrows}    % Arrows of a groupoid
\DeclareMathOperator{\Ad}{Ad}           % Adjoint
\newcommand{\der}{\overrightarrow}
\begin{document}

\title{Integrability of Poisson-Lie group actions}
\author{Rui Loja Fernandes}
\address{Departamento de Matem\'{a}tica\\
Instituto Superior T\'{e}cnico\\1049-001 Lisboa\\ Portugal}
\email{rfern@math.ist.utl.pt}
\thanks{R.L.F. is partially supported by the Funda\c{c}\~ao para a Ci\^encia e a Tecnologia
(FCT/Portugal)and project PTDC/MAT/098936/2008. D.I.P. is partially supported by CSIC through
a ``JAE-Doc" research contract, by MICYT (Spain) Grants  MTM2007-62478 and
S-0505/ESP/0158 of the CAM. Both authors are partially supported by CSIC/FCT grant 2007PT0014.}

\author{David Iglesias Ponte}
\address{Instituto de Ciencias Matem\'{a}ticas \\
CSIC-UAM-UC3M-UCM\\
C/ Serrano 123, 28006 Madrid\\ Spain}
\email{iglesias@imaff.cfmac.csic.es}
\keywords{Poisson actions, twisted multiplicative, integrability}
\subjclass[2000]{53D17,53D20}

%\date{January, 2009}

\begin{abstract}
We establish a 1:1 correspondence between Poisson-Lie group
actions on integrable Poisson manifolds and twisted multiplicative
hamiltonian actions on source 1-connected symplectic groupoids.
For an action of a Poisson-Lie group $G$ on a Poisson manifold
$M$, we find an explicit description of the lifted hamiltonian
action on the symplectic groupoid $\Sigma(M)$. We give
applications of these results to the integration of Poisson
quotients $M/G$, Lu-Weinstein quotients $\mu^{-1}(e)/G$
and Poisson homogeneous spaces $G/H$.
\end{abstract}

%%% ----------------------------------------------------------------------
\maketitle
%%% ----------------------------------------------------------------------

%%%%%%%%%%%%%%%%%%%%%%%%%%%%%%%%%%%%
%%%%%%%%%%%%%%%%%%%%%%%%%%%%%%%%%%%%
%%%%%%%%%%%%%%%%%%%%%%%%%%%%%%%%%%%%
\section*{Introduction}             %
\label{sec:introduction}           %
%%%%%%%%%%%%%%%%%%%%%%%%%%%%%%%%%%%%
%%%%%%%%%%%%%%%%%%%%%%%%%%%%%%%%%%%%
%%%%%%%%%%%%%%%%%%%%%%%%%%%%%%%%%%%%

Let $(M,\pi)$ be a Poisson manifold, $G$ a Lie group and assume that $G$ acts on $M$ by Poisson diffeomorphisms.
Such an action usually does not admit a momentum map (e.g., if the action is transverse to the symplectic
leaves). However, there is a symplectization functor which turns this action into a hamiltonian action. More
precisely, associated with an integrable Poisson manifold $(M,\pi)$ there is a canonical symplectic groupoid
$\Sigma(M)\tto M$ and the action lifts to a hamiltonian action on the symplectic groupoid $(\Sigma(M),\Omega)$
with momentum map $J:\Sigma(M)\to\gg^*$ (see \cite{Fer,FerOrRa,Xu0} and references therein).

One should think of $J$ as a canonical momentum map which is
attached to the Poisson action, and which always exists. This map satisfies
\[ J(x\cdot y)=J(x)+J(y),\]
and, in fact, it is the Lie groupoid morphism that integrates the
Lie algebroid morphism $j:T^*M\to\gg^*$ canonically associated
with the action (here we view $\gg^*$ as an abelian Lie algebra).
The momentum map $J$ is relevant, e.g., to understand the reduced
space $M/G$. Namely, $\Sigma(M)//G:=J^{-1}(0)/G$ is a symplectic
groupoid integrating the Poisson quotient $M/G$ (which, however,
does not need to coincide with $\Sigma(M/G)$; see \cite{FerOrRa}).

Our aim is to understand how this theory extends to actions of
Poisson-Lie groups. Suppose $G\times M\to M$ is a Poisson action
of a Poisson-Lie group $G$, with associated Lie bialgebra
$(\gg,\gg^*)$. The map $j:T^*M\to \gg^*$ is still a Lie algebroid
morphism and it integrates to a Poisson groupoid morphism
$J:\Sigma(M)\to G^*$, where $G^*$ is the 1-connected,
dual Poisson-Lie group of $G$.
%%?
We would like to lift the $G$-action on $M$ to
a hamiltonian action on $\Sigma(M)$ with momentum map
$J:\Sigma(M)\to G^*$. As was observed by Lu \cite{Lu1}, any
Poisson map $J$ into $G^*$ determines a \emph{local} hamiltonian
$G$-action with momentum map $J$, so there exists a \emph{local}
hamiltonian $G$-action on $\Sigma(M)$.

Recall that a Poisson-Lie group $G$ is said to be {\it complete} if the infinitesimal dressing action of $\gg^*$
on $G$ can be integrated to a global action $G\times G^*\to G$. Our main result is the following:

\begin{thmz}
\label{thm:main}
Let $G$ be a complete Poisson-Lie group,
$M$ an integrable Poisson manifold and $G\times M\to M$ a
Poisson action. There exists a lifted Poisson action of $G$
on the symplectic groupoid $\Sigma(M)$, which is hamiltonian with a
canonical momentum map $J:\Sigma(M)\to G^*$.
\end{thmz}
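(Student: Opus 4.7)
The plan is to follow Lu's momentum-map construction \cite{Lu1} and promote the local $G$-action she associates to a Poisson map $J:P\to G^*$ into a global one, exploiting the completeness hypothesis in an essential way. I would proceed in three steps.

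\emph{Step 1 (Infinitesimal action).} For each $\xi\in\gg$, define a vector field on $\Sigma(M)$ by the Lu formula
\[
\hat\xi := \pi_{\Sigma(M)}^{\sharp}\bigl(J^{*}\theta^\xi\bigr),
\]
where $\theta^\xi$ is the right-invariant $1$-form on $G^*$ extending $\xi\in\gg\cong T_e^*G^*$, and $\pi_{\Sigma(M)}$ is the Poisson tensor dual to the symplectic form $\Omega$. Since $J:\Sigma(M)\to G^*$ is a Poisson map---indeed a Poisson groupoid morphism---Lu's standard calculation shows that $\xi\mapsto\hat\xi$ is a Lie algebra (anti-)homomorphism $\gg\to\X(\Sigma(M))$, and that $J$ serves as the (infinitesimal) momentum map for this $\gg$-action.

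\emph{Step 2 (Global integration).} Here the completeness of $G$ enters: it is precisely the condition which guarantees that each $\hat\xi$ is a complete vector field, so that the $\gg$-action integrates to a global action $G\times\Sigma(M)\to\Sigma(M)$. Standard arguments, again as in \cite{Lu1}, then show that this global action is Poisson and Hamiltonian with momentum map $J$.

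\emph{Step 3 (Lifting property).} It remains to verify that the constructed $G$-action on $\Sigma(M)$ genuinely lifts the given action on $M$, i.e.\ that $\s:\Sigma(M)\to M$ is $G$-equivariant (and likewise for $\t$). It suffices, by connectedness of $G$, to show that $\hat\xi$ is $\s$-related to $\xi_M$. I would first establish this along the unit section: at $1_x\in\Sigma(M)$ one has $J(1_x)=e$, and the differential $dJ_{1_x}:T_{1_x}\Sigma(M)\to\gg^*$ vanishes on $T_xM$ while restricting to $j_x:T_x^*M\to\gg^*$ on the complementary cotangent factor of $T_{1_x}\Sigma(M)\cong T_xM\oplus T_x^*M$. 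Combined with the canonical form of $\Omega$ at a unit, this yields $d\s\bigl(\hat\xi(1_x)\bigr)=\xi_M(x)$. Propagating the identity off the unit section would use the multiplicativity of $J$ together with the source-simply-connectedness of $\Sigma(M)$ to transport $\s$-relatedness along source paths.

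The hard part is Step~3. Whereas Steps~1 and~2 amount to transplanting Lu's theorem into the symplectic groupoid setting, the compatibility of the lifted action with the groupoid projection $\s$ requires a delicate interplay between the symplectic structure on $\Sigma(M)$, the Poisson--Lie structure on $G^*$, and the Poisson groupoid morphism property of $J$. I expect the twisted-multiplicative nature of the lifted action---highlighted in the abstract but not in the theorem statement itself---to be the mechanism that ultimately forces this last compatibility.
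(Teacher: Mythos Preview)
Your Step~2 is where the real difficulty lies, and the assertion that completeness of $G$ ``is precisely the condition which guarantees that each $\hat\xi$ is a complete vector field'' is unjustified. Completeness of $G$ as a Poisson--Lie group means the dressing vector fields on $G$ (equivalently on $G^*$) are complete; it says nothing a priori about the vector fields $\hat\xi=\pi_{\Sigma(M)}^\sharp(J^*\theta^\xi)$ on $\Sigma(M)$. Yes, $\hat\xi$ is $J$-related to the dressing field on $G^*$, so $J$ carries its integral curves to complete curves downstairs, but that does not prevent the upstairs curve from escaping $\Sigma(M)$ in finite time. Lu's paper contains no ``standard argument'' for this; her construction yields only a local action for exactly this reason. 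The paper circumvents the issue entirely by writing down the lifted action \emph{explicitly} on cotangent paths,
\[
g\cdot[a(t)]\ :=\ \bigl[g^{\,J(x)J(x(t))^{-1}}\cdot a(t)\bigr],
\]
where $g\mapsto g^u$ is the right dressing action of $G^*$ on $G$ and $g$ acts on $T^*M$ by the cotangent lift. Completeness of $G$ enters transparently as the condition that $g^{\,J(x)J(x(t))^{-1}}$ is globally defined for all $t$; one then checks directly that the formula respects cotangent homotopy, defines a genuine $G$-action, and makes $J$ equivariant. The identification with Lu's infinitesimal action is made only \emph{a posteriori}, via a uniqueness argument (a twisted-multiplicative action on a source 1-connected groupoid is determined by $J$ together with the induced action on the Lie algebroid).

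Your Step~3 also contains an error worth flagging: the source map is \emph{not} $G$-equivariant. Only the target satisfies $\t(g\,x)=g\,\t(x)$; the source obeys the twisted relation $\s(g\,x)=g^{J(x)}\s(x)$, consistent with the twisted multiplicativity $g(x\cdot y)=(gx)\cdot(g^{J(x)}y)$. So the lifted action covers the $G$-action on $M$ through $\t$, not $\s$, and the propagation of $\s$-relatedness you sketch would not go through as stated.
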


Our proof of Theorem \ref{thm:main} takes advantage of the
description of $\Sigma(M)$ in terms of cotangent paths (see
\cite{CrFe,CrFe2}) to explicitly construct the lifted $G$-action.
In this respect, it is important to observe that in the situation
described by Theorem \ref{thm:main}, elements of $G$ \emph{do not
act} as groupoid automorphims. In fact, we have the following
result:

\begin{propz}
\label{prop:main:aux:intro} Let $G$ be a connected,
complete Poisson-Lie group, and $\G\tto M$ a source-connected
symplectic groupoid. For a hamiltonian action $G\times\G\to \G$
with momentum map $J:\G\to G^*$ such that $J(M)={e}$, the
following are equivalent:
\begin{enumerate}[(i)]
\item $J:\G\to G^*$ is a groupoid morphism:
\[ J(x\cdot y)=J(x)\cdot J(y),\quad x,y\in\G^{(2)}.\]
\item The twisted multiplicativity property holds:
\begin{equation}
\label{eq:twisted}
g(x\cdot y)=(gx)\cdot (g^{J(x)}y),\quad x,y\in\G^{(2)}, g\in G,
\end{equation}
where we denote by $g^u$ the right dressing action of $u\in G^*$
on an element $g\in G$.
\end{enumerate}
\end{propz}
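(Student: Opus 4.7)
My plan is to reduce both (i) and (ii) to the same infinitesimal identity on $\G^{(2)}$, and then use source-connectedness together with the completeness of $G$ to integrate. The key tool is Lu's characterization of hamiltonian Poisson--Lie actions by their momentum maps: for every $\xi\in\gg$, the infinitesimal generator $\xi_\G$ satisfies $i_{\xi_\G}\Omega = J^*\theta^L_\xi$, where $\Omega$ is the symplectic form on $\G$ and $\theta^L_\xi$ denotes the pairing of $\xi$ with the left Maurer--Cartan form of $G^*$. Source-connectedness of $\G$ together with the hypothesis $J|_M = e$ then allows one to recover multiplicative identities on $\G$ from their infinitesimal versions along the unit section, where both sides of any such identity hold trivially.

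Differentiating $(\ref{eq:twisted})$ at $g = e$ in a direction $\xi\in\gg$, and introducing the dressing derivative $\xi^u := \frac{d}{dt}\big|_{t=0}\exp(t\xi)^u\in\gg$ (so $\xi^e = \xi$), yields the vector-field identity on $\G^{(2)}$
\[
\xi_\G(x\cdot y) = d\m_{(x,y)}\bigl(\xi_\G(x),\;(\xi^{J(x)})_\G(y)\bigr). \qquad (\star)
\]
Conversely, $(\star)$ integrates back to $(\ref{eq:twisted})$ because $G$ is connected and complete (so the dressing flows on $G$ are global), and $\G$ is source-connected. By the same mechanism, condition (i) is equivalent to the infinitesimal multiplicativity of $J$, namely
\[
dJ_{x\cdot y}\bigl(d\m_{(x,y)}(v_1, v_2)\bigr) = dR_{J(y)}\bigl(dJ_x v_1\bigr) + dL_{J(x)}\bigl(dJ_y v_2\bigr) \qquad (\star\star)
\]
for every $(v_1, v_2) \in T_{(x,y)}\G^{(2)}$. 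The proposition therefore reduces to the equivalence $(\star)\Leftrightarrow(\star\star)$.

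To establish this equivalence I would contract $(\star)$ with $\Omega$ and invoke Lu's formula to rewrite each term as a pullback of $\theta^L$ by $J$. The Lagrangian property of the graph of $\m$ inside $\G\times\G\times\G$ (with symplectic form $\Omega\oplus\Omega\oplus(-\Omega)$) relates $\Omega$ at $x\cdot y$ to $\Omega$ at $x$ and at $y$, while the standard multiplicative identity $\m_{G^*}^*\theta^L = \pr_2^*\theta^L + \Ad^*_{J(y)^{-1}}\pr_1^*\theta^L$ on $G^*\times G^*$ matches the resulting equation with the $\Omega$-dual of $(\star\star)$. The main obstacle is the bookkeeping of conventions: one must verify that the dressing twist $\xi \mapsto \xi^{J(x)}$ on the $\gg$-side of $(\star)$ corresponds precisely to $\Ad^*_{J(x)^{-1}}$ on the $G^*$-side of $(\star\star)$, which is exactly the infinitesimal content of the right dressing action. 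Once this identification is carried out, the equivalence $(\star)\Leftrightarrow(\star\star)$ follows by a direct computation, and hence so does (i)$\Leftrightarrow$(ii).
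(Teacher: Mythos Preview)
Your strategy is correct and largely parallels the paper's. Both arguments hinge on the infinitesimal twisted multiplicativity identity $(\star)$ (the paper's equation (\ref{eq:twisted:inft}), with $\xi^{J(x)}$ identified with $\Ad^*J(x)\cdot\xi$), together with the multiplicativity of $\Omega$ and the momentum relation $i_{\psi(\xi)}\Omega=J^*\xi^R$. For (ii)$\Rightarrow$(i) your route and the paper's are essentially identical: plug $(\star)$ into the multiplicativity of $\Omega$ and read off that $J^*\xi^R$ is right-invariant, hence $J$ is a groupoid morphism (the paper phrases the conclusion this way rather than via your $(\star\star)$, but these are equivalent given $J|_M=e$ and source-connectedness).

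The genuine difference is in (i)$\Rightarrow$(ii). You propose to dualize $(\star\star)$ through $\Omega$ and the Maurer--Cartan identity on $G^*$ to recover $(\star)$; this works but requires tracking the $\Ad^*$-twists carefully. The paper instead exploits a structural fact about Poisson groupoids: since $\Pi=\Omega^{-1}$ is multiplicative, the map $\pi^\sharp:T^*\G\to T\G$ is a groupoid morphism, and since $J$ is a groupoid morphism its cotangent lift $T^*G^*\to T^*\G$ is too. Feeding the identity $(\xi^R)_{uv}=(\xi^R)_u\star_{G^*}(\Ad^*u\cdot\xi)^R_v$ through $J^*$ and then $\pi^\sharp$ yields $(\star)$ in one line, with the twist appearing automatically from the cotangent groupoid multiplication on $G^*$. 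This bypasses the bookkeeping you flag as the ``main obstacle'' and makes the origin of the dressing twist transparent.
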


When the Poisson structure on $G$ vanishes we have $G^*=\gg^*$. In this case, $G^*$ acts trivially on $G$, so
$G$ is always complete. Also, the twisting disappears and the action is by groupoid automorphisms, so we recover
the results mentioned above. The twisted multiplicativity property \eqref{eq:twisted} was also observed by Lu in
\cite{Lu2}.

It is easy to check that, under the assumptions of
Proposition \ref{prop:main:aux:intro}, the twisted multiplicativity
property \eqref{eq:twisted} implies that there is an induced $G$-action
on the units $M$ and that this action is Poisson. Hence,
our results establish a 1:1 correspondence:
\[
\framebox[4.7 cm]{\parbox{4.5 cm}{Poisson actions on integrable Poisson manifolds}}\, \longleftrightarrow \,
\framebox[7.2 cm]{\parbox{7 cm}{Twisted multiplicative hamiltonian actions on source 1-connected symplectic groupoids}}
\]

We illustrate Theorem \ref{thm:main} with a few applications to
the problem of integrating various classes of Poisson manifolds.
The first simple application is:

\begin{thmz}
\label{thm:Poisson:quotients}
Let $G$ be a complete Poisson-Lie group, $(M,\pi)$ an integrable Poisson manifold and $G\times M\to M$ a proper and free Poisson action.
The reduced space
\[\Sigma(M)//G:=J^{-1}(e)/G\]
is a symplectic groupoid integrating the Poisson manifold $M/G$.
\end{thmz}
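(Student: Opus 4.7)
My strategy would be to combine Theorem~\ref{thm:main} with Poisson-Lie symplectic reduction at $e\in G^*$. The key observation I plan to exploit is that twisted multiplicativity reduces to ordinary multiplicativity on $J^{-1}(e)$, which is precisely what makes the reduction compatible with the groupoid structure.

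First, I would use Theorem~\ref{thm:main} to obtain a hamiltonian $G$-action on $\Sigma(M)$ with momentum map $J:\Sigma(M)\to G^*$, and apply Proposition~\ref{prop:main:aux:intro} to conclude that $J$ is a groupoid morphism. I would then identify the linearization of $J$ along the units $M\subset\Sigma(M)$ with the contraction $j:T^*M\to\gg^*$, $\alpha\mapsto(\xi\mapsto\alpha(X_\xi))$, and observe that $j$ is fibrewise surjective because the $G$-action on $M$ is free. It follows that $e$ is a regular value of $J$ and $J^{-1}(e)$ is a wide Lie subgroupoid of $\Sigma(M)$ with Lie algebroid $\ker j$.

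Next, since $e\in G^*$ is fixed by the right dressing action, $J^{-1}(e)$ is $G$-invariant. For $(x,y)\in J^{-1}(e)^{(2)}$, equation~\eqref{eq:twisted} together with $g^e=g$ gives $g(x\cdot y)=(gx)\cdot(gy)$, so $G$ acts on $J^{-1}(e)$ by groupoid automorphisms covering the original free and proper Poisson action on $M$. I would transfer freeness and properness from $M$ to $J^{-1}(e)$ via $\s$ and $\t$, and then invoke Poisson-Lie reduction (Lu) to produce a symplectic form $\omega_{\red}$ on the quotient $\Sigma(M)//G=J^{-1}(e)/G$ from $\Omega|_{J^{-1}(e)}$. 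Because both the inclusion $J^{-1}(e)\hookrightarrow\Sigma(M)$ and the projection $J^{-1}(e)\to J^{-1}(e)/G$ are groupoid morphisms, multiplicativity of $\Omega$ descends first to $\Omega|_{J^{-1}(e)}$ and then to $\omega_{\red}$, so $\Sigma(M)//G$ is a symplectic groupoid over $M/G$.

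To identify its Lie algebroid, I would note that freeness and properness of the action on $M$ give $\ker j=\mathrm{Ann}(\gg_M)\cong\pr^*T^*(M/G)$, where $\pr:M\to M/G$ is the projection and $\gg_M\subset TM$ is the orbit distribution. Taking the $G$-quotient yields $T^*(M/G)$ with its canonical cotangent algebroid structure, which is exactly the Lie algebroid of the Poisson quotient $M/G$. Hence $\Sigma(M)//G$ integrates $M/G$.

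\textbf{Main obstacle.} The hardest part will be checking cleanly that on $J^{-1}(e)$ the lifted action is not just hamiltonian but actually by groupoid automorphisms; this is what underpins freeness, properness, and the multiplicativity of the reduced form. All three rely on the collapse of the twisted formula \eqref{eq:twisted} to ordinary multiplicativity over $e$, which is the genuinely Poisson-Lie feature of the argument and is exactly what the combined input of Theorem~\ref{thm:main} and Proposition~\ref{prop:main:aux:intro} delivers.
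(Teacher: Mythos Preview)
Your proposal is correct and follows essentially the same route as the paper: lift the action via Theorem~\ref{thm:main}, observe that $J^{-1}(e)$ is a $G$-invariant Lie subgroupoid on which the twisted multiplicativity \eqref{eq:twisted} collapses to ordinary multiplicativity, transfer freeness/properness along the equivariant maps $\t$ and $\eps$, and apply Lu--Weinstein reduction so that the reduced symplectic form is multiplicative. One minor logical slip: you invoke Proposition~\ref{prop:main:aux:intro} to \emph{deduce} that $J$ is a groupoid morphism, but that proposition is an equivalence and gives you nothing without one of the two sides as input; in the paper $J$ is a groupoid morphism by construction (it integrates the Lie bialgebroid morphism $j$, cf.~Corollary~\ref{cor:integ:J}), and Proposition~\ref{prop:main:aux:intro} is then used in the other direction to obtain twisted multiplicativity---or, equivalently, you can simply cite the full statement of Theorem~\ref{thm:lift:actions}, which packages both properties.
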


In general, $\Sigma(M)//G\not=\Sigma(M/G)$. We will see that the failure
in this equality can be controlled in much the same way as in the
case of actions by Poisson diffeomorphisms \cite{FerOrRa} (see
Theorem \ref{thm:int:reduct:alt} below).

The problem of integrating the Poisson quotient $M/G$ has already been discussed by several authors. The first
results in this direction are due to Xu \cite{Xu1}. Later, Lu in \cite{Lu3}, Stefanini in \cite{Ste} and Bonechi
\textit{et al.} \cite{BCST} derive results on integration based on more complicated procedures, using various
notions of action algebroids/groupoids and doubles. Our approach gives a clear explanation for the fact that the
completeness of $G$ implies the existence of a global action on $\Sigma(M)$, rather than just a local one, a
problem circumvented by these authors.

Our second application of Theorem \ref{thm:main} is to the integration of the Poisson manifold obtained by
reduction of a hamiltonian action $G\times M\to M$ with equivariant momentum map $\mu:M\to G^*$. If the action
is proper and free, Lu \cite{Lu1} has shown that the Marsden-Weinstein type quotient $\mu^{-1}(e)/G$ is a
Poisson submanifold of $M/G$. In general, a Poisson submanifold of an integrable Poisson manifold does not need
to be integrable (see \cite{CrFe2}), and when it is integrable, its symplectic groupoid need not be a
subgroupoid of the symplectic groupoid of the ambient Poisson manifold. We will give a simple condition that
guarantees the integrability of $\mu^{-1}(e)/G$ and yields a symplectic groupoid integrating $\mu^{-1}(e)/G$
which is a symplectic subgroupoid of $\Sigma(M)//G$. In the case where $G$ is a 1-connected, simple, compact Lie
group, a classical result of Alekseev \cite{Alek} states that one can gauge transform the Poisson structure so
that the Poisson action becomes an action by Poisson diffeomorphisms. Our methods allow us to describe what
happens to this operation at the level of the lifted action on the symplectic groupoid.

Our last application of Theorem \ref{thm:main} is to the
integrability of Poisson homogeneous spaces. If $G$ is any Poisson-Lie group,
the action of $G$ on itself by left translations is Poisson. Hence, the
theorem shows that it lifts to a hamiltonian $G$-action on $\Sigma(G)$ with
a momentum map $J:\Sigma(G)\to G^*$. If $H\subset G$ is a closed, connected, coisotropic subgroup,
the quotient $G/H$ is a Poisson homogeneous space (\cite{Lu2}). The coisotropy condition is equivalent
to the annihilator $\hh^\perp\subset\gg^*$ of the Lie algebra of $H$ being a Lie subalgebra. We will say
that the pair $(G,H)$ is \emph{relatively complete} if $\hh^\perp\subset\gg^*$ integrates to a closed subgroup
$H^\perp\subset G^*$ and the right dressing action $\gg^*\to\X(G)$ restricted to $\hh^\perp$ integrates to an
action of $H^\perp$ on $H$.

Our methods then lead to a simple proof of the following theorem, which improves results of
\cite{BCST} (see, also, \cite{Lu3} for a more general approach to
the integration of Poisson homogeneous spaces):

\begin{thmz}
Let $G$ be a Poisson-Lie group and let $H\subset G$ be a
closed, coisotropic subgroup, such that $(G,H)$ is relatively complete.
Then $J^{-1}(H^\perp )/H\tto G/H$ is a symplectic groupoid integrating the Poisson homogeneous space $G/H$.
\end{thmz}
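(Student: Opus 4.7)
The plan is to apply the main Theorem~1 to the canonical Poisson action of $G$ on itself (which is Poisson by multiplicativity of $\pi_G$) and then to obtain the integration of $G/H$ by coisotropic reduction of the symplectic groupoid $\Sigma(G)$. By Theorem~1, this action lifts to a hamiltonian $G$-action on $(\Sigma(G),\Omega)$ with momentum map $J\colon\Sigma(G)\to G^*$, which is a Poisson groupoid morphism.

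Coisotropy of $H\subset G$ makes $\hh^\perp\subset\gg^*$ a Lie subalgebra, and by relative completeness it integrates to a closed Lie subgroup $H^\perp\subset G^*$; moreover $H^\perp$ is coisotropic in $G^*$ because $(\hh^\perp)^\perp=\hh$ is a Lie subalgebra of $\gg$. Pulling back under the Poisson morphism $J$, one finds that $J^{-1}(H^\perp)$ is a closed coisotropic Lie subgroupoid of $\Sigma(G)$ over $G$. Equivariance of $J$, combined with the fact that relative completeness dually supplies an action of $H$ on $H^\perp$ by left dressing, ensures that the $G$-action restricts to a free and proper $H$-action on $J^{-1}(H^\perp)$.

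The crucial step is identifying the characteristic distribution $(TJ^{-1}(H^\perp))^{\Omega}$ with the directions of $H$-orbits. The hamiltonian condition gives $\iota_{\xi_{\Sigma(G)}}\Omega=J^*\xi^L$, where $\xi^L$ is the left-invariant 1-form on $G^*$ with $\xi^L|_e=\xi\in(\gg^*)^*=\gg$. For $\xi\in\hh=(\hh^\perp)^\perp$ the covector $\xi$ annihilates $T_eH^\perp=\hh^\perp$, and left invariance propagates this to the whole of $TH^\perp$; consequently $J^*\xi^L$ vanishes on $TJ^{-1}(H^\perp)$, so $\xi_{\Sigma(G)}$ lies in the characteristic distribution. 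A dimension count $\dim\hh=\codim_{G^*}H^\perp=\codim_{\Sigma(G)}J^{-1}(H^\perp)$ turns the inclusion into equality, and standard coisotropic reduction endows $J^{-1}(H^\perp)/H$ with a symplectic form.

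Finally, the twisted multiplicativity of Proposition~2 shows that the $H$-action respects source and target on $J^{-1}(H^\perp)$, so its multiplication descends to a Lie groupoid structure $J^{-1}(H^\perp)/H\tto G/H$, for which the reduced symplectic form is still multiplicative. A comparison identifies the Lie algebroid of this quotient with the cotangent Lie algebroid $T^*(G/H)$, so $J^{-1}(H^\perp)/H$ integrates the Poisson homogeneous space $G/H$. The main obstacle is the distribution identification in the third paragraph, where coisotropy of $H$ and relative completeness jointly force a genuine symplectic reduction rather than a mere Poisson quotient.
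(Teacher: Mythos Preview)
Your argument contains a genuine gap at the very first step. You invoke Theorem~1 to obtain a global hamiltonian $G$-action on $\Sigma(G)$, but Theorem~1 requires $G$ to be a \emph{complete} Poisson-Lie group, whereas the hypothesis of the statement you are proving assumes only that the pair $(G,H)$ is \emph{relatively complete}. These are different conditions: relative completeness only asks that $\hh^\perp$ integrate to a closed subgroup $H^\perp\subset G^*$ and that the restricted dressing action of $H^\perp$ on $H$ be global. When $G$ is not complete there is in general no global $G$-action on $\Sigma(G)$ at all (the paper's Example after Proposition~3.4 shows precisely this failure for $G$ acting on itself). Consequently, the phrase ``the $G$-action restricts to a free and proper $H$-action on $J^{-1}(H^\perp)$'' has no content: there is nothing to restrict.

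The paper's proof avoids this by never claiming a global $G$-action. It uses only the \emph{local} lifted action (Proposition~3.4 and Remark~3.6), together with the explicit cotangent-path formula~(3.2). The point is that for $h\in H$ and $x\in J^{-1}(H^\perp)$, one may represent $x$ by a cotangent path with values in $j^{-1}(\hh^\perp)$, so that every partial element $x(t)$ also lies in $J^{-1}(H^\perp)$; then the dressing factor $h^{J(x)J(x(t))^{-1}}$ in~(3.2) only involves the dressing action of $H^\perp$ on $H$, which is global precisely by relative completeness. This is the step where relative completeness is actually used, and it is what produces a global $H$-action on $J^{-1}(H^\perp)$ without any global $G$-action on $\Sigma(G)$. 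Once this is in place, your subsequent discussion of the characteristic distribution and the descent of the groupoid structure via twisted multiplicativity is along the right lines (and in fact more detailed than the paper's own treatment of the symplectic reduction). A minor point: in the paper's conventions the momentum condition reads $i_{\psi(\xi)}\Omega=J^*\xi^R$ with right-invariant forms, not $\xi^L$.
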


The results presented in this paper are part of a wider picture: since Poisson-Lie
groups are the group-like objects in the category of Poisson
groupoids, one should expect them to appear as the \emph{group
of symmetries} of such objects. A systematic study of
symmetries of Poisson groupoids and their infinitesimal
counterparts, Lie bialgebroids, will be the subject of a separate publication
\cite{FP}.

The rest of this paper is organized as follows. In Section \ref{sec:background} we review a few notions and
facts we will need to state and prove our results. Section \ref{sec:actions} contains a proof of Theorem
\ref{thm:main} and other results concerning Poisson actions. Section \ref{sec:applications} contains the three
applications of Theorem \ref{thm:main} mentioned above.

\vskip 10pt \noindent \textbf{Acknowledgments.} The authors would like to thank the following institutions for
their hospitality and support: Erwin Schr\"odinger International Institute for Mathematical Physics (R.L.F. and
D.I.P.), Consejo Superior de Investigaciones Cient\'{\i}ficas (R.L.F.) and Instituto Superior T\'{e}cnico
(D.I.P.). They would also like to thank Yvette Kosmann-Schwarzbach and the anonymous referees for comments and
corrections on a preliminary version of this paper.

%%%%%%%%%%%%%%%%%%%%%%%%%%%%%%%%%%%%
%%%%%%%%%%%%%%%%%%%%%%%%%%%%%%%%%%%%
%%%%%%%%%%%%%%%%%%%%%%%%%%%%%%%%%%%%
\section{Basic Notions}            %
\label{sec:background}             %
%%%%%%%%%%%%%%%%%%%%%%%%%%%%%%%%%%%%
%%%%%%%%%%%%%%%%%%%%%%%%%%%%%%%%%%%%
%%%%%%%%%%%%%%%%%%%%%%%%%%%%%%%%%%%%

In this section we give a short review of all the basic notions we
will need: Poisson and symplectic groupoids, Lie bialgebra(oid)s
and Poisson actions.

%%%%%%%%%%%%%%%%%%%%%%%%%%%%%%%%%%%%%%%%%%%%%%%%%%%%%%%%%%%%%%%%
\subsection{Poisson and symplectic groupoids}                  %
\label{subsec:groupoids}                                       %
%%%%%%%%%%%%%%%%%%%%%%%%%%%%%%%%%%%%%%%%%%%%%%%%%%%%%%%%%%%%%%%%

Let $\G$ be a Lie groupoid over $M$. We denote by $\s$ and $\t$
the source and target maps, by $\m:\G^{(2)}\to\G$ the
multiplication (defined on the space $\G^{(2)}$ of pairs of
composable arrows), by $\textbf{i}:\G\to\G$ the inverse map, and
by $\eps:M\to \G$ the identity section. Our convention for the
groupoid multiplication is such that, given two arrows $x,y\in\G$,
the product $x\cdot y:=\m(x,y)$ is defined provided $\s(x)=\t(y)$.
Also, if $m\in M$ we write $1_m:=\eps(m)$ for the unit arrow over
$m$, and if $x\in\G$ we write $x^{-1}:=\textbf{i}(x)$ for the
inverse arrow. We denote the groupoid by $\G\tto M$.

We will be interested in Lie groupoids $\G\tto M$ carrying Poisson structures on
the space of arrows and on the space of units. A \emph{Poisson groupoid} is a
pair $(\G,\Pi)$, where $\G$ is a Lie groupoid and $\Pi\in\X^2(\G)$ is a multiplicative
Poisson structure. This means
that the graph of the groupoid multiplication
\[ \graf(\m):=\{(x,y,x\cdot y) \,|\, \s (x) = \t (y)\}\]
is a coisotropic submanifold of $\G\times \G\times\bar{\G}$ (\cite{We}).
When $\Pi$ is non-degenerate, so $\Omega =\Pi^{-1}$ is a symplectic form,
the multiplicativity condition amounts to:
\begin{equation}
\label{eq:mult:symp}
\m^*\Omega=\pi_1^*\Omega+\pi_2^*\Omega,
\end{equation}
where $\pi_i:\G^{(2)}\to\G$ are the projections on each factor. In
this case, we say that the pair $(\G,\Omega)$ is a
\emph{symplectic groupoid}. A \emph{morphism of Poisson groupoids}
is a Lie groupoid morphism $\Phi:(\G_1,\Pi_1)\to (\G_2,\Pi_2)$
which is also a Poisson map.

For this paper, the two most important examples are the following:

\begin{ex}
Lie groups are precisely the Lie groupoids for which the space of units reduces
to a single object. For a Lie group $G$, a Poisson structure $\pi_G$ is multiplicative
iff the multiplication $m:G\times G\to G$ is a Poisson map (where we consider the
Poisson structure $\pi_G\oplus\pi_G$ on $G\times G$). In this case,
one calls $(G,\pi_G)$ a \emph{Poisson-Lie group}.
\end{ex}

\begin{ex}
Let $(M,\pi)$ be a Poisson manifold. Its \emph{Weinstein groupoid} $\Sigma(M)\tto M$
is defined as (see \cite{CrFe2}):
\[ \Sigma(M):=\frac{\{\text{cotangent paths}\}}{\{\text{cotangent homotopies}\}},\]
where multiplication is concatenation of cotangent paths. If $p:T^*M\to M$ denotes the projection,
the source and target maps are given by:
\[ \s([a])=p(a(0)),\quad \t([a])=p(a(1)).\]

A Poisson manifold $(M,\pi)$ is called integrable if its Weinstein groupoid $\Sigma(M)$ is
smooth (in which case, one has $\dim \Sigma(M)=2\dim M$). The obstructions to integrability
were determined in \cite{CrFe,CrFe2}. When $(M,\pi)$ is integrable, $\Sigma(M)$ carries a
natural multiplicative symplectic form $\Omega$. Moreover, the source (respectively, target map)
is a Poisson (resp.~anti-Poisson map).
\end{ex}

Alan Weinstein in \cite{We} observed that the properties of the source/target maps in the last
example are by no means exceptional: given a Poisson groupoid $(\G ,\Pi)$ with base $M$ there exists
a unique Poisson structure $\pi$ on $M$, such that $\s :\G \to M$ is a Poisson map and $\t :\G \to M$ is anti-Poisson.

%%%%%%%%%%%%%%%%%%%%%%%%%%%%%%%%%%%%%%%%%%%%%%%%%%%%%%%%%%%%%%%%
\subsection{Lie bialgebras and Lie bialgebroids}               %
\label{subsec:bialgebroids}                                    %
%%%%%%%%%%%%%%%%%%%%%%%%%%%%%%%%%%%%%%%%%%%%%%%%%%%%%%%%%%%%%%%%

Now let us go to the infinitesimal level. We will denote by $A$ a Lie algebroid with bundle projection $p:A\to
M$, anchor $\#:A\to TM$, and Lie bracket $[~,~]_A$ on its space of sections. The $A$-differential forms are
$\Omega^\bullet(A):=\Gamma(\wedge^\bullet A^*)$ and they form a complex with the $A$-differential $\d_A$ (see,
e.g., \cite{Mc2}). Our conventions are such that if $\G\tto M$ is a Lie groupoid, then its Lie algebroid
$A=A(\G)$ has $A_x:=\Ker\d_{1_x}\s$ and $\#_x:=\d_{1_x}\t$. Moreover, $\Gamma(A)=\X(A)$ is identified with the
space $\X_r(\G)$ of right invariant vector fields on $\G$ and we will denote by $\der{X}\in \X_r(\G)$ the right
invariant vector field corresponding to $X\in \Gamma(A)$. Similarly, one obtains identifications of
$\X^\bullet(A)$ and $\Omega^\bullet(A)$ with the spaces $\X^\bullet_r(\G)$ and $\Omega^\bullet_r(\G)$ of right
invariant multivector fields and differential forms on $\G$ (note that a right invariant differential form is a
$\s$-foliated differential form on $\G$). Under these identifications, the bracket $[~,~]_A$ and the
differential $\d_A$ are identified with the Schouten bracket on right invariant multivector fields and the de
Rham differential on right invariant differential forms.

We recall the following basic proposition, due to Weinstein \cite{We}:

\begin{prop}\label{prop:induced-bialgebroid}
If $(\G,\Pi)$ is a Poisson groupoid then it induces a Lie algebroid structure
on $A(\G)^*$, the dual of the Lie algebroid, whose $A(\G)^*$-differential
is characterized by
\begin{equation}\label{eq:dual-differential}
 \der{\d_{A(\G)^*}X} = -[\der{X},\Pi] ,\qquad (X\in \X (A)).
\end{equation}
\end{prop}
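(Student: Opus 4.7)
My plan is to verify that the given formula defines a degree-one graded derivation $\d_{A(\G)^*}$ on $\X^\bullet(A) = \Gamma(\wedge^\bullet A)$ (identified with right-invariant multivectors $\X^\bullet_r(\G)$) which squares to zero. Such a graded derivation is equivalent to a Lie algebroid structure on $A^*$.

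The essential issue is well-definedness: for $X \in \Gamma(A)$, the Schouten bracket $[\der{X},\Pi] \in \X^2(\G)$ must in fact be right-invariant. I would prove this by exploiting that the flow $\phi_t$ of the right-invariant field $\der{X}$ is given by left translation by a one-parameter family of (local) bisections $\beta_t$ integrating $X$; since left and right translations commute, $\phi_t$ commutes with every right translation $R_y$ on its domain of definition. Multiplicativity of $\Pi$ can be expressed as the affine decomposition
\[ \Pi(xy) = (R_y)_*\Pi(x) + (L_x)_*\Pi(y) - (L_x R_y)_*\Pi(1_{\s(x)}) \]
at each composable pair $(x,y)$, and a direct computation using this decomposition at the pair $(\beta_t(\t(y)),y)$ shows that $\phi_t^*\Pi - \Pi$ is itself a right-invariant bivector for each small $t$. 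Differentiating at $t=0$ then yields $\Lie_{\der{X}}\Pi = [\der{X},\Pi] \in \X^2_r(\G)$, as required. This step is the only one that genuinely uses multiplicativity, and it is the main obstacle.

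Once well-definedness is in hand, one extends $\d_{A(\G)^*}$ to all of $\X^\bullet(A)$ by the operator $\alpha \mapsto -[\alpha,\Pi]$, viewed on $\X^\bullet(\G)$ and restricted to the right-invariant part; this is a degree-one graded derivation for the wedge product because the Schouten bracket is, and the same left/right-translation argument shows that $[\,\cdot\,,\Pi]$ preserves right-invariance in all degrees. The identity $\d_{A(\G)^*}^2 = 0$ follows from the graded Jacobi identity together with $[\Pi,\Pi]=0$:
\[ \bigl[[\alpha,\Pi],\Pi\bigr] = \tfrac{1}{2}\bigl[\alpha,[\Pi,\Pi]\bigr] = 0. \]
Dually, $\d_{A(\G)^*}$ encodes an anchor $A^* \to TM$ (from its action on $C^\infty(M) \subset \X^\bullet(A)$) and a Lie bracket on $\Gamma(A^*)$, giving the claimed Lie algebroid structure on $A^*$.
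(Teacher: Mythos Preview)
The paper does not actually prove this proposition: it is introduced with ``We recall the following basic proposition, due to Weinstein \cite{We}'' and no argument is given. So there is no in-paper proof to compare against; the result is simply quoted from the literature.

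Your strategy is the standard one and is essentially correct. One caveat: the ``affine decomposition''
\[
\Pi(xy) = (R_y)_*\Pi(x) + (L_x)_*\Pi(y) - (L_x R_y)_*\Pi(1_{\s(x)})
\]
is problematic as written, because in a groupoid $R_y$ is only defined on the $\s$-fiber through $x$ (and $L_x$ only on the $\t$-fiber through $y$), so the pushforwards of full bivectors on the right-hand side are not literally defined. The clean pointwise encoding of multiplicativity is that $\Pi^\sharp:T^*\G\to T\G$ is a morphism of Lie groupoids, or equivalently that $\graf(\m)$ is coisotropic. Fortunately your argument does not really need that formula: the flow $\phi_t$ of $\der{X}$ is left translation by a local bisection, which \emph{is} a well-defined local diffeomorphism of $\G$ commuting with right translations, and one can check directly from either formulation of multiplicativity that $(\phi_t)_*\Pi-\Pi$ is right-invariant. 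With that adjustment the remainder---extension by $\alpha\mapsto -[\alpha,\Pi]$, the identity $\d_{A(\G)^*}^2=0$ from $[\Pi,\Pi]=0$ and graded Jacobi, and the passage to a Lie algebroid structure on $A(\G)^*$---is correct.
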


This leads to the infinitesimal counterpart of a Poisson groupoid,
i.e., the notion of a \emph{Lie bialgebroid}. This is pair of Lie
algebroid structures $(A,A^*)$, on a vector bundle $A\to M$ and on
its dual bundle $A^*\to M$, such that for any $X,\ Y\in \X(A)$,
\[
\d_{A^*}[X, Y]_A = \Lie _{X}\d_{A^*}Y-\Lie_{Y}\d_{A^*}X.
\]
Just like for a Poisson groupoid, if $(A, A^*)$ is a Lie bialgebroid over $M$,
there exists a Poisson structure $\pi _M$ on $M$ which is characterized by
\[
\pi _M (df ,dg )= \# (\d_{A^*}f) (g)=\langle \d_{A^*}f, \d_A g\rangle , \qquad (f,g \in C^\infty (M)).
\]
A \emph{morphism} of Lie bialgebroids $\phi:(A_1,A_1^*)\to (A_2,A_2^*)$ is a Lie algebroid morphism
$\phi:A_1\to A_2$ which is also a Poisson map (note that $A_i$ has a fiberwise linear Poisson structure,
being the dual of the Lie algebroid $A_i^*$).

If $(\G,\Pi)$ is a Poisson groupoid, it follows from Proposition \ref{prop:induced-bialgebroid} that
$(A(\G),A(\G)^*)$ is a Lie bialgebroid. Conversely, it is proved in
\cite{McXu2} that any Lie bialgebroid structure $(A,A^*)$, where
$A$ can be integrated to a Lie groupoid, can actually be integrated to a Poisson
groupoid $(\G(A),\Pi )$. Here $\G (A)$ is just the source 1-connected Lie
groupoid integrating $A$. In this situation, the Poisson structures on $M$ induced by
$(\G(A),\Pi)$ and $(A,A^*)$ coincide. Similarly, Poisson groupoid morphisms $\Phi:\G_1\to \G_2$
are in 1:1 correspondence with Lie bialgebroid morphisms $\phi:(A_1,A_1^*)\to (A_2,A_2^*)$, provided
$\G_1$ is source 1-connected.

Note that the notion of Lie bialgebroid is symmetric: if $(A,A^*)$
is a Lie bialgebroid so is $(A^*,A)$. On the other hand, at the
level of groupoids things are more subtle: for example, in a Lie
bialgebroid $(A,A^*)$ it is possible that $A$ is integrable while
$A^*$ is not.

\begin{ex}
It is well known that if $(M,\pi)$ is a Poisson manifold, then $A=T^*M$ is Lie algebroid with
anchor $\pi^\sharp:T^*M\to TM$ and Lie bracket:
\[ [\al,\be]=\Lie_{\pi^\sharp(\al)}\be-\Lie_{\pi^\sharp(\be)}\al-\d(\pi(\al,\be)).\]
When one consider $A^*=TM$ with its canonical Lie algebroid structure, the pair $(T^*M,TM)$ becomes
a Lie bialgebroid. While $A^*=TM$ is always integrable, $A=T^*M$ does not have to be integrable. Its
integrability is equivalent to the integrability of $(M,\pi)$. When $(M,\pi)$ is integrable, $(\Sigma(M),\Omega)$
is the source 1-connected symplectic groupoid integrating the Lie bialgebroid $(T^*M,TM)$.
\end{ex}

\begin{ex}
If $(G,\pi_G)$ is a Poisson-Lie group, the corresponding Lie
bialgebroid is just a Lie bialgebra $(\gg,\gg^*)$. According to
our conventions, $\gg$ is the space of right invariant vector
fields on $G$. We can also identify $\gg^*$ with the space of
right invariant 1-forms on $G$. The bracket on 1-forms induced by
$\pi_G$ (see the previous example) preserves the right invariant
forms, and it induces the Lie bracket $[~,~]_{\gg^*}$ on $\gg^*$.

The 1-connected Lie group integrating $\gg^*$, denoted $G^*$, is called the dual Poisson-Lie group: its Lie
bialgebra is $(\gg^*,\gg)$.
\end{ex}
%%%%%%%%%%%%%%%%%%%%%%%%%%%%%%%%%%%%%%%%%%%%%%%%%%%%%%%%%%%%%%%%
\subsection{Poisson actions}                                   %
\label{subsec:poisson:actions}                                 %
%%%%%%%%%%%%%%%%%%%%%%%%%%%%%%%%%%%%%%%%%%%%%%%%%%%%%%%%%%%%%%%%

Let $(G,\pi_G)$ be a Poisson-Lie group and let $(M,\pi)$ be a Poisson manifold. Recall that a smooth action
$\Psi:G\times M\to M$ is called a \emph{Poisson action} if $\Psi$ is a Poisson map. Here the product $G\times M$
is equipped with the direct sum Poisson structure $\pi_G\oplus\pi$.

For a smooth action $\Psi:G\times M\to M$ of a Lie group on a manifold $M$, we will denote by
$\psi:\gg\to\X(M)$ the corresponding infinitesimal Lie algebra action defined by:
\[ \psi(\xi)_a=\left.\frac{\d}{\d t}\exp(t\xi)a\right|_{t=0}\quad (\xi\in\gg).\]
According to our conventions, $\gg$ is identified with the space of right invariant vector fields on $G$ and it
follows that $\psi:\gg\to \X(M)$ is a Lie algebra homomorphism. The following characterization of Poisson
actions is due to Lu \cite{Lu,Lu1}:

\begin{prop}
\label{prop:Poisson:actions:1}
Let $(G,\pi_G)$ be a connected Poisson-Lie group and let $(M,\pi)$ be a Poisson manifold.
For a smooth action $\Psi:G\times M\to M$ the following two properties are equivalent:
\begin{enumerate}[(i)]
\item The action $\Psi$ is Poisson;
\item Setting $\delta:=\d_e\pi:\gg\to\gg\wedge\gg$, the infinitesimal action satisfies:
\[ \Lie_{\psi(\xi)}\pi=(\psi\wedge\psi)\delta(\xi),\quad (\xi\in\gg).\]
\end{enumerate}
\end{prop}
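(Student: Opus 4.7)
The plan is to translate the Poisson-map condition on $\Psi$ into a pointwise identity on bivectors, and then differentiate at $g=e$ to extract the infinitesimal statement. Using the splitting $T_{(g,m)}(G\times M)=T_gG\oplus T_mM$ together with the two partial maps $\Psi_g:=\Psi(g,\cdot):M\to M$ and $\Psi_m:=\Psi(\cdot,m):G\to M$, one sees that $\Psi$ is Poisson if and only if
\[ \pi_{g\cdot m} = (\Psi_g)_*\pi_m + (\Psi_m)_*(\pi_G)_g, \qquad (g,m)\in G\times M. \]

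For (i)$\Rightarrow$(ii), I would fix $m\in M$ and $\xi\in\gg$, set $g_t:=\exp(t\xi)$ and $\phi_t:=\Psi_{g_t}$ (the flow of $\psi(\xi)$), apply $(\phi_{-t})_*$ to the displayed equation at $g=g_t$, and differentiate at $t=0$. The left-hand side yields $(\Lie_{\psi(\xi)}\pi)_m$ by the standard formula for the Lie derivative of a bivector. On the right, the first summand becomes the $t$-independent bivector $\pi_m$ and contributes zero; in the second summand, the identity $(\pi_G)_e=0$ together with the Leibniz rule kills the derivative of $(\phi_{-t})_*$, leaving $(\Psi_m)_*\,\tfrac{\d}{\d t}|_{t=0}(\pi_G)_{g_t}=(\Psi_m)_*\delta(\xi)=(\psi\wedge\psi)\delta(\xi)_m$.

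For the converse (ii)$\Rightarrow$(i), I would show that the \emph{defect}
\[ D(g,m):=\pi_{g\cdot m}-(\Psi_g)_*\pi_m-(\Psi_m)_*(\pi_G)_g \]
vanishes identically. A direct computation, using the multiplicativity relation $(\pi_G)_{gh}=(R_h)_*(\pi_G)_g+(L_g)_*(\pi_G)_h$ combined with the chain-rule identities $\Psi_m\circ L_g=\Psi_g\circ\Psi_m$ and $\Psi_m\circ R_h=\Psi_{h\cdot m}$, should yield the cocycle relation
\[ D(gh,m)=D(g,h\cdot m)+(\Psi_g)_*D(h,m), \]
so that $\{g\in G:D(g,\cdot)\equiv 0\}$ is a subgroup of $G$. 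Setting $\Delta(t,m):=(\phi_{-t})_*D(g_t,m)\in\wedge^2T_mM$, this cocycle translates into $\Delta(s+t,m)=(\phi_{-t})_*\Delta(s,\phi_tm)+\Delta(t,m)$; since (ii) is exactly $\partial_s|_{s=0}\Delta(s,\cdot)=0$, differentiating at $s=0$ yields $\partial_t\Delta(t,m)\equiv 0$, and hence $\Delta\equiv 0$. Since $G$ is connected it is generated by $\exp(\gg)$, and the subgroup property then propagates $D\equiv 0$ to all of $G\times M$.

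The main obstacle is this bootstrap in the converse: (ii) only controls the first derivative of $D$ at the identity, and one needs the cocycle identity for $D$ to convert infinitesimal vanishing into an evolution equation for $\Delta(\cdot,m)$ whose unique solution is zero. A subsidiary bookkeeping point is that the ``derivative'' $\tfrac{\d}{\d t}|_{t=0}(\pi_G)_{g_t}$ only makes intrinsic sense because $(\pi_G)_e=0$, and it is exactly this vanishing which simultaneously makes the linearization $\delta:\gg\to\gg\wedge\gg$ well defined and forces the Leibniz rule in the forward direction to collapse onto the single term $\delta(\xi)$.
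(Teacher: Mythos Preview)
The paper does not prove this proposition: it is stated as background material and attributed to Lu \cite{Lu,Lu1} without any argument. So there is nothing in the paper to compare your approach against.

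That said, your proof sketch is sound and is essentially the standard argument. The reformulation
\[
\pi_{g\cdot m} = (\Psi_g)_*\pi_m + (\Psi_m)_*(\pi_G)_g
\]
is exactly the condition that $\Psi$ be Poisson, since $T_{(g,m)}(G\times M)$ splits and $\pi_G\oplus\pi$ has no mixed component. Your differentiation argument for (i)$\Rightarrow$(ii) is correct; the point you single out---that $(\pi_G)_e=0$ is what makes both $\delta(\xi)$ and the collapse of the Leibniz rule work---is precisely the crux. For (ii)$\Rightarrow$(i), the cocycle identity
\[
D(gh,m)=D(g,h\cdot m)+(\Psi_g)_*D(h,m)
\]
does follow from multiplicativity of $\pi_G$ together with the two chain-rule identities you list (a short computation confirms it), and your translation into $\Delta(s+t,m)=(\phi_{-t})_*\Delta(s,\phi_t m)+\Delta(t,m)$ is correct. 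Differentiating in $s$ at $s=0$ then gives $\partial_t\Delta(t,m)=(\phi_{-t})_*\bigl[\partial_s|_{s=0}\Delta(s,\phi_t m)\bigr]=0$, and since $\Delta(0,\cdot)=0$ (because $(\pi_G)_e=0$) you get $\Delta\equiv 0$ along one-parameter subgroups; connectedness plus the cocycle then finish the job.

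One minor bookkeeping caution: when you write $\partial_s|_{s=0}(\pi_G)_{g_s}$, the bivectors live in varying tangent spaces, so to make the computation precise you should trivialise by right translation---i.e., work with $\pi_G^R(g):=(R_{g^{-1}})_*(\pi_G)_g\in\wedge^2\gg$, whose derivative along $\exp(t\xi)$ at $t=0$ is by definition $\delta(\xi)$---and then use $\Psi_n\circ R_{g_s}=\Psi_{g_s\cdot n}$ to identify the push-forward. This is implicit in what you wrote but worth making explicit if you write this up in full.
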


The map $\delta$ is just (the dual of) the Lie bracket on $\gg^*$. Hence, the proposition leads to a definition
of an infinitesimal action of a Lie bialgebra $(\gg,\gg^*)$ on a Poisson manifold $(M,\pi)$.

\begin{ex}
Let $(G,\pi_G)$ be a Poisson-Lie group with Lie bialgebra $(\gg,\gg^*)$. According to our conventions, we can
identify $\gg^*$ with the space of right invariant 1-forms on $G$. The map $\lambda:\gg^*\to\X(G)$ which to a
right invariant 1-form $\eta\in\gg^*$ associates the vector field $\pi_G^\sharp(\eta)$ is a Lie algebra morphism
and so defines an (left) infinitesimal action of $\gg^*$ on $G$. Using Proposition \ref{prop:Poisson:actions:1},
one checks that this is an infinitesimal action of $(\gg^*,\gg)$ on the Poisson manifold $(G,\pi_G)$, called the
\emph{left dressing action}. Similarly, the identification of $\gg^*$ with the left invariant 1-forms on $G$,
leads to Lie algebra anti-morphism $\rho:\gg^*\to\X(G)$ and hence to a \emph{right dressing action}. Switching
the roles of $G$ and $G^*$ we also obtain left/right dressing actions of $\gg$ on $G^*$. If one of the
infinitesimal dressing actions is complete so is the other. We say that $(G,\pi_G)$ is a \emph{complete
Poisson-Lie group} if the right dressing action $\rho:\gg ^*\to\X(G)$ integrates to a (Poisson) right action of
$(G^*,\pi_{G^*})$ on $(G,\pi_G)$.
\end{ex}

There is another useful characterization of Poisson actions, due to Xu \cite{Xu0}:

\begin{prop}
\label{prop:Poisson:actions:2}
Let $(G,\pi_G)$ be a connected Poisson-Lie group and let $(M,\pi)$ be a Poisson manifold. For a smooth action
$\Psi:G\times M\to M$ define $j:T^*M\to\gg^*$ by
\[ \langle j(\al),\xi\rangle=\langle \al,\psi(\xi)\rangle,\quad (\xi\in\gg).\]
Then the following two properties are equivalent:
\begin{enumerate}[(i)]
\item The action $\Psi$ is Poisson;
\item The map $j:T^*M\to\gg^*$ is a Lie bialgebroid morphism.
\end{enumerate}
\end{prop}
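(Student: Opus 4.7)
The proof plan is to decompose the Lie bialgebroid morphism condition into its two defining pieces and verify them separately. Per the definition recalled just above, $j:(T^*M,TM)\to(\gg^*,\gg)$ is a Lie bialgebroid morphism iff (a) $j$ is a Lie algebroid morphism from the cotangent algebroid of $(M,\pi)$ to the Lie algebra $\gg^*$ (whose bracket is dual to $\delta=d_e\pi_G$), and (b) $j$ is a Poisson map for the fibrewise-linear Poisson structures on $T^*M$ (dual of $TM$) and on $\gg^*$ (dual of $\gg$). The equivalence (i)$\Leftrightarrow$(ii) then follows once one shows that (b) holds for \emph{any} smooth action, while (a) is equivalent to Lu's criterion in Proposition~\ref{prop:Poisson:actions:1}.

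For (b): on a linear function $\xi\in\gg\subset C^\infty(\gg^*)$ the pullback is the fibre-linear function $j^*\xi=\iota_{\psi(\xi)}$ on $T^*M$, namely $\alpha\mapsto\alpha(\psi(\xi))$. The Lie--Poisson bracket of $\xi$ and $\eta$ in $\gg$ is $[\xi,\eta]_\gg$, and the canonical Poisson bracket of two such fibre-linear functions satisfies $\{\iota_{\psi(\xi)},\iota_{\psi(\eta)}\}=\iota_{[\psi(\xi),\psi(\eta)]}$. Since $\psi:\gg\to\X(M)$ is a Lie algebra homomorphism for any smooth group action, the identity $j^*\{\xi,\eta\}_{\gg^*}=\{j^*\xi,j^*\eta\}_{T^*M}$ holds on linear functions and extends to all of $C^\infty(\gg^*)$ by the derivation property, pullbacks of constants being trivial.

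For (a): I would use the standard description of Lie algebroid morphisms as chain maps at the level of algebroid forms. Under the identifications $\Omega^\bullet(\gg^*)=\wedge^\bullet\gg$ and $\Omega^\bullet(T^*M)=\X^\bullet(M)$, the pullback $j^*$ sends a generator $\xi\in\gg$ to the vector field $\psi(\xi)\in\X(M)$; the Chevalley--Eilenberg differential of the Lie algebra $\gg^*$ sends $\xi\in\gg$ to $\pm\delta(\xi)$ by definition of $\delta$; and the Lichnerowicz differential on $\X^\bullet(M)$ sends a vector field $X$ to $\pm\Lie_X\pi$. The chain-map condition applied to the generator $\xi$ therefore reads, after matching sign conventions,
\[ \Lie_{\psi(\xi)}\pi=(\psi\wedge\psi)\delta(\xi),\]
which is precisely the criterion of Proposition~\ref{prop:Poisson:actions:1}. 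Anchor compatibility is vacuous because the anchor of $\gg^*$ is zero, the degree-$0$ condition is trivial because $j$ covers the constant map, and the higher-degree conditions follow by the graded Leibniz rule, $\wedge^\bullet\gg$ being generated in degree one and both differentials being graded derivations. Combining (a) and (b), the proposition follows. The only delicate point in carrying this out is the sign bookkeeping between $\delta$, the Chevalley--Eilenberg differential on $\wedge^\bullet\gg$ with respect to $[\,\cdot\,,\,\cdot\,]_{\gg^*}$, and the Lichnerowicz differential on $\X^\bullet(M)$, so as to reproduce Lu's formula with exactly the correct sign.
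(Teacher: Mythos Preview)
The paper does not prove this proposition; it merely attributes it to Xu \cite{Xu0} and states it without argument. Your proposal therefore cannot be compared with a proof in the paper, but it is a correct and self-contained argument. Your decomposition into (a) Lie algebroid morphism and (b) Poisson map matches the definition of Lie bialgebroid morphism given in the paper, and both verifications are sound: the identity $\{\iota_X,\iota_Y\}=\iota_{[X,Y]}$ on $T^*M$ indeed reduces (b) to $\psi$ being a Lie algebra homomorphism, while the chain-map criterion for Lie algebroid morphisms, applied to $j^*:\wedge^\bullet\gg\to\X^\bullet(M)$, collapses (a) to Lu's condition in degree one, with the extension to higher degrees following from $\wedge^\bullet\gg$ being generated in degree one. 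Your caveat about signs is appropriate: with the conventions of the paper (\S\ref{subsec:bialgebroids}) one has $\d_{T^*M}X=-[\pi,X]=\Lie_X\pi$ up to the sign in \eqref{eq:dual-differential}, and $\d_{\gg^*}\xi=\delta(\xi)$, so the chain-map identity in degree one is exactly $\Lie_{\psi(\xi)}\pi=(\psi\wedge\psi)\delta(\xi)$.
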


%%%%%%%%%%%%%%%%%%%%%%%%%%%%%%%%%%%%%%%%%%%%%%%%%%%%%%%%%%%%%%%%
\subsection{Hamiltonian actions}                               %
\label{subsec:hamiltonian}                                     %
%%%%%%%%%%%%%%%%%%%%%%%%%%%%%%%%%%%%%%%%%%%%%%%%%%%%%%%%%%%%%%%%

Let $\Psi:G\times M\to M$ be a Poisson action. A smooth map
$\mu:M\to G^*$ is called a \emph{momentum map} for the
action if:
\begin{equation}
\label{eq:momentum}
\psi(\xi)=\pi^{\sharp}(\mu^*\xi^R)\quad (\xi\in\gg).
\end{equation}
Here, $\xi^R\in\Omega^1(G^*)$ is the right invariant 1-form on $G^*$ with value $\xi\in\gg$ at the identity
$e\in G^*$. Observe that when $\pi_G=0$, so that $G^*=\gg^*$, the momentum map condition \eqref{eq:momentum}
reduces to the usual condition. These generalized momentum maps were first studied by Lu in \cite{Lu,Lu1}.

\begin{ex}
From the definition, it is evident that the left/right dressing actions of $G$ on $G^*$ have momentum map
$G^*\to G^*$ the identity map. Similarly, the left/right dressing actions of $G^*$ on $G$ have momentum map
$G\to G$ the identity. These actions satisfy versions of the twisted multiplicativity property
\eqref{eq:twisted}. For example, the (left) dressing action $G\times G^*\to G^*$ satisfies:
\begin{equation}
\label{eq:twist:dress}
g(u_1\cdot  u_2)=(g u_1)\cdot(g^{u_1} u_2),\qquad (g\in G,\ u_1,u_2\in G^*).
\end{equation}
\end{ex}

A proof of the following basic fact can be found in \cite{Lu1}:

\begin{prop}
Let $(G,\pi_G)$ be a connected and complete Poisson-Lie group. A
momentum map $\mu:M\to G^*$ for a Poisson action $G\times
M\to M$ is $G$-equivariant (relative to the left dressing action
of $G$ on $G^*$) if and only if it is a Poisson map.
\end{prop}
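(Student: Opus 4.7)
The plan is to rephrase both conditions as the single infinitesimal identity
\[
\d\mu\circ\psi(\xi) \;=\; \lambda(\xi)\circ\mu \qquad(\xi\in\gg),
\]
where $\lambda(\xi):=\pi_{G^*}^\sharp(\xi^R)$ is the infinitesimal generator of the left dressing action of $G$ on $G^*$, as recalled in the dressing-action example. Geometrically, this identity says exactly that the vector fields $\psi(\xi)$ on $M$ and $\lambda(\xi)$ on $G^*$ are $\mu$-related for every $\xi\in\gg$.

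To connect this identity with the Poisson property of $\mu$, I would substitute the momentum map equation $\psi(\xi)=\pi^\sharp(\mu^*\xi^R)$ into the left side, obtaining
\[
\d\mu\bigl(\pi^\sharp(\mu^*\xi^R)\bigr) \;=\; \pi_{G^*}^\sharp(\xi^R)\circ\mu.
\]
This is precisely the Poisson-map condition $\d\mu\circ\pi^\sharp\circ\mu^* = \pi_{G^*}^\sharp\circ\mu$ tested against the 1-form $\xi^R$. Since the right-invariant 1-forms $\{\xi^R:\xi\in\gg\}$ span $T_u^*G^*$ at every point $u\in G^*$, every covector in $T^*_{\mu(m)}G^*$ is of the form $\xi^R_{\mu(m)}$ for some $\xi$, so the family of identities parametrized by $\xi$ is equivalent, pointwise at every $m\in M$, to $\mu$ being Poisson.

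For the equivariance side, the same infinitesimal identity expresses $\mu$-relatedness of $\psi(\xi)$ and $\lambda(\xi)$. Completeness of $(G,\pi_G)$ guarantees that $\lambda$ integrates to the global left dressing action of $G$ on $G^*$, so the $\mu$-relatedness integrates to the intertwining of flows $\mu(\exp(t\xi)\cdot m)=\exp(t\xi)\cdot\mu(m)$. Connectedness of $G$ then promotes this to $\mu(g\cdot m)=g\cdot\mu(m)$ for every $g\in G$, by writing $g$ as a finite product of exponentials and iterating the intertwining. Conversely, differentiating global equivariance at the identity recovers the infinitesimal identity.

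The only substantive point, and the reason both hypotheses appear, lies in this integration step: without completeness the left dressing action $G\curvearrowright G^*$ is only locally defined, so the very notion of $G$-equivariance for $\mu$ would be ambiguous; and without connectedness of $G$ one cannot propagate the intertwining from exponentials to arbitrary group elements. Once these are in force, the proof reduces to bookkeeping with the momentum map equation and the spanning property of right-invariant forms.
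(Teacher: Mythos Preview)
Your argument is correct. The paper does not actually prove this proposition; it is quoted as a ``basic fact'' with a reference to Lu's paper \cite{Lu1}, so there is no in-paper proof to compare against. What you have written is the standard argument (and essentially the one Lu gives): the momentum map equation converts $\mu$-relatedness of $\psi(\xi)$ and the dressing vector field $\lambda(\xi)$ into the Poisson-map condition tested on the pointwise frame $\{\xi^R\}$, while completeness and connectedness of $G$ handle the passage between infinitesimal relatedness and global equivariance.
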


We will say that a Poisson action $G\times M\to M$ is a \emph{hamiltonian action} if it admits an equivariant
momentum map $\mu:M\to G^*$. Lu has also shown that the usual Marsden-Weinstein symplectic reduction extends to
these hamiltonian actions.

In order to explain this fact, let $G\times M\to M$ be a hamiltonian action on a Poisson manifold, with momentum
map $\mu:M\to G^*$. If $u\in G^*$, denote by $G_u$ the isotropy group of $u$ for the the left dressing action of
$G$ on $G^*$. Then we have the following result (see Lu \cite{Lu1}):

\begin{thm}
Let $G\times M\to M$ be a proper and free hamiltonian action, with momentum map $\mu:M\to G^*$.
For each $u\in G^*$, the level set $\mu^{-1}(u)$ carries a
natural Dirac structure $L_u$, the space
$\mu^{-1}(u)/G_u$ carries a natural Poisson structure and
we have a commutative diagram:
\[
\xymatrix{
     &M\ar[dr]\\
\mu^{-1}(u)\ar[ur]\ar[dr]& &M/G\\
 &\mu^{-1}(u)/G_u\ar[ur]}
\]
where the inclusions are backward Dirac maps and the projections are forward Dirac maps (see \cite{BuRa} for the
definition of these classes of maps).
\end{thm}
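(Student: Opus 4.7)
The plan is to adapt the classical Marsden--Weinstein construction to the Poisson--Lie setting using Dirac geometry. Writing $N:=\mu^{-1}(u)$, I would first check that $N$ is a smooth $G_u$-invariant submanifold. Freeness of the $G$-action gives $\psi(\xi)_x=0\Rightarrow\xi=0$, and the momentum map equation $\psi(\xi)=\pi^\sharp(\mu^*\xi^R)$ then forces the assignment $\xi\mapsto\mu^*\xi^R|_x$ to be injective from $\gg$ into $T^*_xM$. Dually $d\mu_x$ is surjective, so $u$ is a regular value and $N$ has codimension $\dim G^*$. Equivariance of $\mu$ under the left dressing action yields $\mu(g\cdot x)=g\cdot u=u$ for $g\in G_u$, so $N$ is $G_u$-stable, and the $G_u$-action on $N$ inherits properness and freeness from the ambient $G$-action.

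Next, I would take $L_u$ to be the backward pullback to $N$ of the Poisson--Dirac structure $L_M=\{(\pi^\sharp\alpha,\alpha)\}\subset TM\oplus T^*M$. Smoothness reduces to showing that $\pi^\sharp(TN^\circ)\cap TN$ has constant rank along $N$. Locally $TN^\circ$ is spanned by the covectors $\mu^*\xi^R$ as $\xi$ ranges in $\gg$, so the momentum map equation identifies $\pi^\sharp(TN^\circ)$ with the tangent distribution to the $G$-orbits restricted to $N$. A generator $\psi(\xi)_x$ lies in $TN=\ker d\mu_x$ if and only if the infinitesimal dressing generator of $\xi$ at $u$ vanishes (by $\mu$-equivariance), i.e.\ precisely when $\xi\in\gg_u:=\mathrm{Lie}(G_u)$. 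Therefore $\pi^\sharp(TN^\circ)\cap TN$ coincides with the tangent distribution to the $G_u$-orbits on $N$, of constant rank $\dim\gg_u$ by freeness, and $L_u$ is a smooth Dirac structure.

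The same computation identifies the null distribution $\ker L_u=\{X\in TN:(X,0)\in L_u\}$ with the $G_u$-orbit directions on $N$. A short computation combining the Poisson-action identity $\Lie_{\psi(\xi)}\pi=(\psi\wedge\psi)\delta(\xi)$ with the $G_u$-invariance of $N$ shows that the $G_u$-action preserves $L_u$; standard Dirac reduction then yields a smooth quotient Dirac structure on $M_u:=N/G_u$ with trivial null distribution, hence a Poisson structure. By construction the inclusion $N\hookrightarrow M$ is backward Dirac and the projection $N\to M_u$ is forward Dirac; the projection $M\to M/G$ with Lu's reduced Poisson structure is forward Dirac, and commutativity of the square then forces $M_u\hookrightarrow M/G$ to be backward Dirac.

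The main obstacle I expect is verifying that the Poisson structure on $M_u$ obtained from the above restriction-then-reduction agrees with the one obtained by first reducing to $M/G$ and then restricting to $M_u\subset M/G$. This ``reduction commutes with restriction'' phenomenon is where one genuinely uses the hamiltonian (rather than merely Poisson) nature of the action, and relies on completeness of $G$ to guarantee the global $G$-equivariance of $\mu$.
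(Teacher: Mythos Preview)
The paper does not prove this theorem at all: it is stated in the background Section~\ref{subsec:hamiltonian} as a known result, attributed to Lu \cite{Lu1}, and no argument is given. So there is no ``paper's own proof'' to compare your proposal against.

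That said, your outline is the standard Dirac-geometric proof and is essentially sound. A couple of small points are worth tightening. First, your closing paragraph misattributes the role of completeness of $G$: in this paper a \emph{hamiltonian} action is, by definition, one with an \emph{equivariant} momentum map, so $G$-equivariance of $\mu$ is already part of the hypotheses and does not need to be derived from completeness. Second, the step asserting that the $G_u$-action preserves $L_u$ is where the Poisson-action identity $\Lie_{\psi(\xi)}\pi=(\psi\wedge\psi)\delta(\xi)$ genuinely enters; you should spell out why, for $\xi\in\gg_u$, the right-hand side is tangent to $N$ in the appropriate sense so that the pullback Dirac structure is preserved. Finally, your last sentence about ``reduction commutes with restriction'' is the content of the commutativity of the diagram, and follows formally once you know both reductions land on Poisson structures on $M_u$ related by a forward/backward Dirac chain; it is not an additional obstacle beyond what you have already set up.
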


\begin{rem}
If one starts with a hamiltonian action on a \emph{symplectic} manifold $(S,\omega)$ the reduced spaces
$\mu^{-1}(u)/G_u$ are also symplectic. In fact, their connected components are the symplectic leaves of the
quotient Poisson manifold $S/G$.
\end{rem}

If we start with a Poisson-Lie group $(G,\pi_G)$ and a Poisson manifold $(M,\pi)$, any Poisson map $\mu:M\to
G^*$ determines an infinitesimal Poisson action $\psi:\gg\to\X(M)$ by setting:
\[ \psi(\xi):=\pi^{\sharp}(\mu^*\xi^R)\quad (\xi\in\gg). \]
Integration gives a \emph{local} Poisson action with equivariant
momentum map $\mu$.

%%%%%%%%%%%%%%%%%%%%%%%%%%%%%%%%%%%%
%%%%%%%%%%%%%%%%%%%%%%%%%%%%%%%%%%%%
%%%%%%%%%%%%%%%%%%%%%%%%%%%%%%%%%%%%
\section{Integration of Poisson actions}%
\label{sec:actions}                %
%%%%%%%%%%%%%%%%%%%%%%%%%%%%%%%%%%%%
%%%%%%%%%%%%%%%%%%%%%%%%%%%%%%%%%%%%
%%%%%%%%%%%%%%%%%%%%%%%%%%%%%%%%%%%%

In this section, we will prove Theorem \ref{thm:main} and other
results concerning the integration of Poisson actions.

%%%%%%%%%%%%%%%%%%%%%%%%%%%%%%%%%%%%%%%%%%%%%%%%%%%%%%%%%%%%%%%%
\subsection{Poisson actions on symplectic groupoids}           %
\label{subsec:twisted:action}                                  %
%%%%%%%%%%%%%%%%%%%%%%%%%%%%%%%%%%%%%%%%%%%%%%%%%%%%%%%%%%%%%%%%

Before considering the problem of lifting a Poisson action on $M$
to a Poisson action on the symplectic groupoid $\Sigma(M)$, we
discuss actions on symplectic groupoids and how the twisted
multiplicativity property arises.

\begin{prop}
\label{prop:main:aux}
Let $(G,\pi_G)$ be a connected, complete
Poisson-Lie group, and let $\G\tto M$ be a source-connected symplectic
groupoid. If $G\times\G\to \G$ is a hamiltonian action with momentum
map $J:\G\to G^*$ such that $J(M)={e}$, the following are
equivalent:
\begin{enumerate}[(i)]
\item $J:\G\to G^*$ is a groupoid morphism:
\[ J(x\cdot y)=J(x)\cdot J(y),\quad x,y\in\G^{(2)}.\]
\item The twisted multiplicativity property holds:
\begin{equation}
\label{eq:twisted:2}
g(x\cdot y)=(gx)\cdot (g^{J(x)}y),\quad x,y\in\G^{(2)}, g\in G,
\end{equation}
where we denote by $g^u$ the right dressing action of $u\in G^*$
on an element $g\in G$.
\end{enumerate}
\end{prop}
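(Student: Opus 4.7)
My plan is to establish the equivalence by reducing both conditions to infinitesimal statements at $g=e$ and comparing them via the momentum map equation $\iota_{\psi(\xi)}\Omega = J^*\xi^R$, the multiplicativity of $\Omega$, and a standard decomposition of right-invariant $1$-forms on $G^*$ under its group multiplication.

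First, since $G$ is connected, (ii) is equivalent to its derivative at $g=e$: for every $\xi\in\gg$ and every composable pair $(x,y)\in\G^{(2)}$,
\[
\psi(\xi)_{x\cdot y} = \d\m_{(x,y)}\bigl(\psi(\xi)_x,\, \psi(\dot\rho_{J(x)}\xi)_y\bigr),
\]
where $\dot\rho_u : \gg \to \gg$ is the derivative at $e \in G$ of the right dressing action $g\mapsto g^u$. This is well-defined because $e$ is a zero-dimensional symplectic leaf of $(G,\pi_G)$, so $e^u = e$ for every $u \in G^*$. Contracting with $\Omega$, applying the momentum map equation, and using $\m^*\Omega = \pi_1^*\Omega + \pi_2^*\Omega$, this infinitesimal form of (ii) becomes the $1$-form identity on $\G^{(2)}$,
\[
\m^*(J^*\xi^R)\big|_{(x,y)} = \pi_1^*(J^*\xi^R)\big|_{(x,y)} + \pi_2^*\bigl(J^*(\dot\rho_{J(x)}\xi)^R\bigr)\big|_{(x,y)}.
\]

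Next, a direct computation on $G^*$ using right-invariance of $\xi^R$ and the definition of the dressing action gives the decomposition
\[
\m_{G^*}^*\xi^R\big|_{(u,v)} = \pi_1^*\xi^R|_u + \pi_2^*(\dot\rho_u\xi)^R|_v.
\]
Hence the infinitesimal form of (ii) is equivalent to $(J\circ\m)^*\xi^R = (\m_{G^*}\circ(J\times J))^*\xi^R$ for every $\xi\in\gg$. The covectors $\{\xi^R|_q\}_{\xi\in\gg}$ span $T^*_q G^*$ pointwise, so this equality of pullbacks, together with the fact that the two maps agree on pairs of units (since $J(M)=\{e\}$), propagates along paths in $\G^{(2)}$, which is connected over each component of $M$ by source-connectedness of $\G$, to force the global equality $J\circ\m = \m_{G^*}\circ(J\times J)$, i.e., condition (i).

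The main obstacle will be correctly identifying the dressing twist $\dot\rho_{J(x)}$ with the non-multiplicativity of the right-invariant $1$-form $\xi^R$ on the non-abelian group $G^*$; once this key identity is established, the equivalence follows by combining the momentum map equation with the multiplicativity of $\Omega$. Completeness of $G$ enters implicitly to ensure the existence of the global right dressing action $G^*\times G\to G$, so that $g^u$ in (ii) makes sense for arbitrary $g\in G$ and $u\in G^*$.
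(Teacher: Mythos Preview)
Your argument is correct and uses the same core ingredients as the paper: passage to the infinitesimal form of (ii), the momentum equation $\iota_{\psi(\xi)}\Omega=J^*\xi^R$, multiplicativity of $\Omega$, and the decomposition $\m_{G^*}^*\xi^R|_{(u,v)}=\pi_1^*\xi^R+\pi_2^*(\Ad^*_u\xi)^R$. Your $\dot\rho_{J(x)}$ is exactly the paper's $\Ad^*J(x)$, since the linearization at $e\in G$ of the right dressing action is the coadjoint action of $G^*$ on $\gg$.

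The difference is organizational. The paper handles the two implications by distinct mechanisms: for (ii)$\Rightarrow$(i) it pairs the infinitesimal identity against vectors of the form $(v,0)$ with $v\in\ker d\s$ to show that $J^*\xi^R$ is right-invariant along source fibers, hence $J$ is a groupoid morphism; for (i)$\Rightarrow$(ii) it invokes the cotangent groupoid multiplications on $T^*\G$ and $T^*G^*$ together with the fact that $\pi^\sharp:T^*\G\to T\G$ is a groupoid morphism. You instead obtain both directions simultaneously by recognizing the $1$-form identity on $\G^{(2)}$ as the equality of right logarithmic (Darboux) derivatives of $J\circ\m$ and $\m_{G^*}\circ(J\times J)$, and then invoking uniqueness of solutions with a common initial condition on the identity section. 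Your route is a bit more elementary in that it avoids the cotangent groupoid machinery; the paper's route for (i)$\Rightarrow$(ii) has the advantage of making explicit where the multiplicativity of the Poisson tensor (not just of $\Omega$) enters. Both are valid and essentially equivalent.
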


\begin{proof}
Denote by $\psi:\gg\to\X(\G)$ the infinitesimal $\gg$-action.
For the proof, we remark that the multiplicativity property \eqref{eq:mult:symp} of the symplectic form $\Omega$, when
evaluated at $(x,y)$ on the pair $(\psi(\xi)_x,\psi(\Ad^*J(x)\cdot \xi)_y),(v,0)\in T_{(x,y)}\G^{(2)}$ yields:
\begin{equation}
\label{eq:mult:aux}
\Omega_{x\cdot y}(\d_{(x,y)}\m(\psi(\xi)_x,\psi(\Ad^* J(x)\cdot \xi)_y),\d_x R_{y}v)=\Omega_x(\psi(\xi)_x,v).
\end{equation}
where $\m:\G^{(2)}\to\G$ is the groupoid multiplication and $R_y$ denotes right translation by the element $y\in\G$ (here $v$
is any vector tangent to the source fiber at $x$).

Now, since the Lie group $G$ is connected, the twisted
multiplicativity property \eqref{eq:twisted:2} is equivalent to
its infinitesimal version, which reads:
\begin{equation}
\label{eq:twisted:inft}
\psi(\xi)_{x\cdot y}=\d_{(x,y)}\m(\psi(\xi)_x,\psi(\Ad^* J(x)\cdot \xi)_y),\quad x,y\in\G^{(2)}, \xi\in\gg.
\end{equation}
So if this condition holds, we conclude from \eqref{eq:mult:aux} that
\[ \Omega_{x\cdot y}(\psi(\xi)_{x\cdot y},\d_x R_{y}v)=\Omega_x(\psi(\xi)_x,v),\]
for any vector $v$ tangent to the source fiber at $x$. In other
words, $i_{\psi(\xi)}\Omega=J^*\xi^R$ is a right invariant 1-form,
for all $\xi\in\gg$. But if $J^*\xi^R$ is a right invariant
1-form, for all $\xi\in\gg$ and $J(M)=e$, then $J:\G\to G^*$ must
be a groupoid homomorphism.

Conversely, assume that $J:\G\to G^*$ is a groupoid homomorphism.
Then:
\begin{equation}\label{eq:momentum:Poisson}
\psi(\xi)=\pi^{\sharp} (J^*\xi^R),
\end{equation}
where $\pi=\Omega^{-1}$ is a multiplicative Poisson structure. Let $\xi \in \gg$ and $(x,y)\in\G^{(2)}$. Since
$J$ is a groupoid morphism, we have
\begin{equation}\label{eq:cotangent:morphism}
J^*(\theta _1\star_{G^*} \theta _2)=(J^*\theta _1)\star_{\G}
(J^*\theta _2), \qquad (\theta _1\star \theta _2)\in
(T^*(G^*))^{(2)},
\end{equation}
where $\star _{\G}$ (respectively,  $\star _{G^*}$) denotes the groupoid multiplication in $T^*\G$
(respectively, $T^*(G^*)$). Now, using \eqref{eq:momentum:Poisson}, \eqref{eq:cotangent:morphism} and the fact
that $(\xi ^R)_{uv}=(\xi^R)_u\star _{G^*}(\Ad^*J(x)\cdot\xi^R)_v$ for $\xi\in \gg$ and $u,v\in G^*$, we deduce
\begin{align*}
\psi (\xi )_{x\cdot y}&=\pi ^\sharp ( J^* \xi ^R)_{x\cdot y}
=\pi ^\sharp ( J^*(( \xi ^R)_{J(x)} \star_{G^*}  ((\Ad^*J(x)\cdot \xi )^R)_{J(y)}))\\
&=\pi ^\sharp ( (J^* \xi ^R)_x \star_{\G}  (J^* (\Ad^*J(x)\cdot \xi )^R)_y )\\
&=\d_{(x,y)}\m(\pi^\sharp (J^* \xi ^R)_x,\pi ^\sharp (J^*(\Ad^*J(x)\cdot \xi)^R)_y)\\
&=\d_{(x,y)}\m(\psi (\xi )_x,\psi (\Ad^*J(x)\cdot \xi )_y) .
\end{align*}
Here, we have also used that $\pi ^\sharp :T^*\G\to T\G$ is a
groupoid morphism, i.e.,
\[
\d\m (\pi^\sharp (\eta _1),\pi ^\sharp (\eta _2))=\pi ^\sharp
(\eta _1\star_{\G} \eta _2),\qquad (\eta _1,\eta _2)\in
(T^*\G)^{(2)}.
\]
Therefore, the infinitesimal condition \eqref{eq:twisted:inft} is
satisfied and, as a consequence, the twisted multiplicativity
condition holds.
\end{proof}

Our next remark is even more general.

\begin{prop}
\label{prop:general}
Let $(G,\pi_G)$ be a complete Poisson-Lie
group, $G\times \G\to\G$ a smooth action on a Lie groupoid, and
$J:\G\to G^*$ a groupoid morphism. If the action satisfies the
twisted multiplicativity property \eqref{eq:twisted:2}, then there
is an induced action on the Lie algebroid $A$ of $\G$. Moreover,
if $\G$ is source 1-connected the $G$-action on $\G$ is completely
determined by $J$ and the induced $G$-action on $A$.
\end{prop}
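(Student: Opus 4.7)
The plan is twofold. For the existence of the induced $G$-action on $A$, I would use that $J(1_m)=e$ (since $J$ is a groupoid morphism) and the twisted multiplicativity \eqref{eq:twisted:2} to show that each diffeomorphism $\Psi_g\colon\G\to\G$ preserves the unit section, and then linearise at the units. For the uniqueness when $\G$ is source 1-connected, I would assume $G$ connected (replacing it by its identity component if necessary), reduce to showing that the infinitesimal action $\psi\colon\gg\to\X(\G)$ is determined by its restriction to $\eps(M)$ together with the cocycle \eqref{eq:twisted:inft}, and propagate this data along $A$-paths.

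First I would substitute $x=y=1_m$ in \eqref{eq:twisted:2} to obtain $g\cdot 1_m=(g\cdot 1_m)\cdot(g\cdot 1_m)$, forcing $g\cdot 1_m$ to be an idempotent, hence the unit $1_{g\cdot m}$ for a unique smooth $g\cdot m\in M$. In particular every $\Psi_g$ preserves $\eps(M)$, so its differential $\d_{1_m}\Psi_g\colon T_{1_m}\G\to T_{1_{g\cdot m}}\G$ preserves the subspace $\d\eps(T_mM)$, and therefore induces a linear isomorphism of the quotients. Via the canonical identification $A_m\cong T_{1_m}\G/\d\eps(T_mM)$, this defines the desired linear $G$-action on $A$ covering the action on $M$.

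For the uniqueness part, two actions $\Psi_1,\Psi_2$ producing the same $J$ and the same induced action on $A$ give rise to $\psi_1,\psi_2$ which already agree at every unit, because under the splitting $T_{1_m}\G=A_m\oplus\d\eps(T_mM)$ the $A_m$-component of $\psi_i(\xi)_{1_m}$ is prescribed by the induced action on $A$ and the $\d\eps(T_mM)$-component by the induced action on $M$. For a general $x\in\G$ with $\s(x)=m_0$, I would use source 1-connectedness to write $x=x(1)$ for a smooth path $x\colon[0,1]\to\s^{-1}(m_0)$ starting at $1_{m_0}$ and associated with an $A$-path $a$ via $\tfrac{\d}{\d t}x(t)=\der{a(t)}_{x(t)}$. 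Decomposing $x(t+\delta)=x(t)\cdot y_\delta$ for a small arrow $y_\delta\in\G_{m_0}$ tending to $1_{m_0}$, and plugging $(x(t),y_\delta)$ into \eqref{eq:twisted:inft}, I would differentiate at $\delta=0$ to produce a first-order linear ODE
\[
\tfrac{\d}{\d t}\,\psi(\xi)_{x(t)}\;=\;F_\xi\bigl(t,\,\psi(\xi)_{x(t)}\bigr),
\]
whose right-hand side involves only $a$, $J(x(t))$, the coadjoint cocycle $\Ad^*J(x(t))$, and the unit values $\psi(\,\cdot\,)_{1_{m_0}}$ --- all dictated by $J$ and the induced action on $A$. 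Both $\psi_1(\xi)_{x(\cdot)}$ and $\psi_2(\xi)_{x(\cdot)}$ solve this ODE with the same initial condition at $t=0$, so they coincide, giving $\psi_1=\psi_2$.

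The hard step will be deriving the ODE cleanly and verifying that its coefficients really involve only the allowed data, without reintroducing unknown derivatives of the action $\Psi$. The computation is in the same spirit as the verification $(ii)\Rightarrow(i)$ in Proposition \ref{prop:main:aux}, but has to be performed in the purely Lie-groupoid setting without the simplifications provided by an ambient symplectic form. Once this is in place, standard uniqueness of solutions to ODEs, together with the connectedness of $G$, completes the proof.
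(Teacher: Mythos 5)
Your construction of the induced action on $A$ is fine: setting $x=y=1_m$ in \eqref{eq:twisted:2} and using $J(1_m)=e$, $g^e=g$ does show that $g\,1_m$ is idempotent, hence a unit, so each $\Psi_g$ preserves $\eps(M)$ and descends to the quotient $T_{1_m}\G/\d\eps(T_mM)\cong A_m$. This agrees with the paper's construction, which instead passes to the modified action $g\odot x:=(g\,x^{-1})^{-1}$ (which preserves source fibers) and differentiates along source-fiber curves through the units; the two coincide because $\d_{1_m}\mathbf{i}$ acts as $-\mathrm{id}$ on $T_{1_m}\G/\d\eps(T_mM)$.

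The uniqueness half, however, has a genuine gap, and it sits exactly at the step you defer. (a) Your ODE does not close as stated. Since $\Psi_g$ preserves units, $\psi(\xi)_{1_m}=\d\eps(\psi_M(\xi)_m)$ lies entirely in $\d\eps(T_mM)$; its ``$A_m$-component'' is zero, so the induced action on $A$ does \emph{not} enter through the values of $\psi$ at units (your stated reason for agreement at units is wrong, even though the conclusion is true). It enters through first-order jets transverse to $\eps(M)$: differentiating \eqref{eq:twisted:inft} at $(x(t),y_\delta)$ in $\delta$ produces the derivative of the unknown field $\psi(\Ad^*J(x(t))\,\xi)$ at $1_{m_0}$ in the direction $\dot y_0\in\Ker\d_{1_{m_0}}\s=A_{m_0}$, i.e.\ an unknown derivative of $\Psi$. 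What the induced action on $A$ actually controls is the corresponding jet of the $\odot$-action, and translating one into the other requires the identity $(g\,x)^{-1}=g^{J(x)}\,x^{-1}$ (a consequence of \eqref{eq:twisted:2} and $J$ being a morphism) which your argument never establishes or uses. Until this translation is done, the coefficients $F_\xi$ are not ``dictated by $J$ and the induced action on $A$.'' (b) Replacing $G$ by its identity component loses the claim: an infinitesimal/ODE argument can only pin down the action of $G^0$, whereas the proposition asserts determinacy for every $g\in G$. The paper sidesteps both problems with a global computation in the $A$-path model: from $(g\,x)^{-1}=g^{J(x)}x^{-1}$ and \eqref{eq:twisted:2} one checks that $\bar x(t):=g^{J(x)J(x(t))^{-1}}\,x(t)$ is an $\s$-path representing $g\,x$, and that its associated $A$-path is $g^{J(x)J(x(t))^{-1}}\cdot a(t)$ --- an explicit formula involving only $J$, the dressing action and the $G$-action on $A$, valid for all $g$. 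I recommend you either adopt that route or carry out the jet bookkeeping in (a) and treat non-identity components separately.
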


\begin{rem}
Note that in this proposition there is no assumption about a
symplectic or Poisson structure on $\G$. Also, the induced action
on $A$, in general, \emph{is not} by Lie algebroid automorphisms.
\end{rem}

\begin{proof}
First, we remark that the twisted multiplicativity property \eqref{eq:twisted:2} and the fact that $J$ is a
homomorphism imply that, for any $x\in\G$, we have:
\[
\left\{
\begin{array}{l}
g\, x=g\,(1_{\t(x)}\cdot x)=(g\,1_{\t(x)})\cdot(g\,x)\\
g\, x=g\,(x\cdot 1_{\s(x)})=(g\,x)\cdot(g^{J(x)}\,1_{\s(x)})
\end{array}\right.
\quad\Rightarrow\quad
\left\{
\begin{array}{l}
g\,1_{\t(x)}=1_{\t(g\,x)}\\
g^{J(x)}\,1_{\s(x)}=\,1_{\s(g\,x)}
\end{array}\right.\]
Therefore, we have an induced $G$-action on $M$ such that:
\[ g\,1_m=1_{g\,m}.\]
Moreover, we also find that:
\[ \t(g\,x)=g\,\t(x),\quad \s(g\,x)=g^{J(x)}\s(x).\]
We will also need the identity:
\[ (g\, x)^{-1}=g^{J(x)}\,x^{-1},\]
whose proof is straightforward from \eqref{eq:twisted:2} and the fact that $J$ is a homomorphism.

The previous identities show that the $G$-action sends $\t$-fibers
to $\t$-fibers, but does not preserve source fibers. However, we
can consider a new $G$-action on $\G$ defined by:
\[ g\odot x:=(g\,x^{-1})^{-1},\]
which does preserve $\s$-fibers, and induces the same action on
the identity section. Hence, we have an induced $G$-action on the
Lie algebroid $A$ of $\G$, by vector bundle automorphisms (but, in
general, not Lie algebroid automorphisms), defined by:
\[ g\,a:=\left.\frac{\d}{\d t}g\odot \gamma(t)\right|_{t=0},\quad (g\in G, a\in A_m)\]
where $\gamma(t)$ is any curve lying in the source fiber
$\s^{-1}(m)$ with $\gamma(0)=1_m$ and $\dot{\gamma}(0)=a$.

If $\G$ has source 1-connected fibers, then we can identify an element
$x\in\G$ with the homotopy class $[x(t)]$, where $x(t)$ is any
$\s$-path, i.e., a path lying in the source fiber through $x$ and
such that $x(0)=1_{\s(x)}$ and $x(1)=x$ (see \cite{CrFe}). Then we can identify $\G$
with the Weinstein groupoid $\G(A)$ consisting of $A$-paths modulo
$A$-homotopy. This identification can be done at the level of
paths by setting:
\[ x(t) \longmapsto a(t):=\left.\frac{\d}{\d s}x(s)\cdot x(t)^{-1}\right|_{s=t}.\]
Using this identification, we transport the $G$-action on
$\G$ to an action on $\G(A)$: if $x$ is represented by the
$\s$-path $x(t)$ then $g\,x$ is represented by the $\s$-path:
\[ \bar{x}(t):=g^{J(x)J(x(t))^{-1}}\,x(t). \]
In fact, we find $\s(\bar{x}(t))=g^{J(x)}\,\s(x(t))=g^{J(x)}\s(x)=\s(g\,x)$ and $\bar{x}(1)=g\,x$. Then we
compute the $A$-path associated to $\bar{x}(t)$,
\begin{align*}
\bar{a}(t):=&\left.\frac{\d}{\d s}\bar{x}(s)\cdot \bar{x}(t)^{-1}\right|_{s=t}\\
=&\left.\frac{\d}{\d s} \left(g^{J(x)J(x(s))^{-1}}\,x(s)\right)\cdot \left(g^{J(x)J(x(t))^{-1}}\,x(t)\right)^{-1}\right|_{s=t}\\
=&\left.\frac{\d}{\d s} \left(g^{J(x)J(x(t))^{-1}}\,\left(x(s)\cdot x(t)^{-1}\right)^{-1}\right)^{-1}\right|_{s=t}\\
=&\left.\frac{\d}{\d s} \left(g^{J(x)J(x(t))^{-1}}\,
\left(x(s)\cdot
x(t)^{-1}\right)\right)\right|_{s=t}=g^{J(x)J(x(t))^{-1}}a(t).
\end{align*}
This last expression shows that the action of $G$ on $\G$ is
completely determined by $J$ and the action of $G$ on $A$, as claimed.
\end{proof}

%%%%%%%%%%%%%%%%%%%%%%%%%%%%%%%%%%%%%%%%%%%%%%%%%%%%%%%%%%%%%%%%
\subsection{Lifting of local Poisson actions}                  %
\label{subsec:lifted:action:local}                             %
%%%%%%%%%%%%%%%%%%%%%%%%%%%%%%%%%%%%%%%%%%%%%%%%%%%%%%%%%%%%%%%%

Let us now consider the problem of lifting a Poisson action on $M$
to a Poisson action on the symplectic groupoid $\Sigma(M)$.

Given a Poisson action $\Psi:G\times M\to M$, it follows from
Proposition \ref{prop:Poisson:actions:2} that the induced map
$j:T^*M\to\gg^*$ is a Lie bialgebroid morphism from $(T^*M,TM)$ to
$(\gg^*,\gg)$. Integrating this morphism (see \cite[Theorem 5.5]{Xu0}),
we conclude that:

\begin{cor}
\label{cor:integ:J} Let $\Psi:G\times M\to M$ be a Poisson action
of a Poisson-Lie group $(G,\pi_G)$ on an integrable Poisson
manifold $(M,\pi)$. The Lie bialgebroid morphism $j:T^*M\to\gg^*$
integrates to a morphism of Poisson groupoids $J:\Sigma(M)\to
G^*$.
\end{cor}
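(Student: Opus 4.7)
The plan is to reduce the statement to an integration theorem for morphisms of Lie bialgebroids, specifically Theorem 5.5 of \cite{Xu0}: any Lie bialgebroid morphism $\phi:(A_1,A_1^*)\to(A_2,A_2^*)$ integrates to a Poisson groupoid morphism $\G_1\to\G_2$, provided $A_1$ is integrable and $\G_1$ is chosen source 1-connected (the last paragraph of Subsection 1.2 records this same principle). The bulk of the work is thus to verify the hypotheses; the corollary is then an immediate application.

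First, I would invoke Proposition \ref{prop:Poisson:actions:2} to conclude that the Poisson action $\Psi$ induces a Lie bialgebroid morphism $j:(T^*M,TM)\to(\gg^*,\gg)$, where $\gg^*$ is viewed as a Lie algebroid over a point. Next, I would identify the relevant Poisson groupoid integrations: since $(M,\pi)$ is integrable, $T^*M$ integrates to the source 1-connected symplectic (hence Poisson) groupoid $(\Sigma(M),\Omega)$ corresponding to the bialgebroid $(T^*M,TM)$; and the Lie bialgebra $(\gg^*,\gg)$ integrates to the 1-connected dual Poisson-Lie group $(G^*,\pi_{G^*})$. With these identifications in place, Xu's theorem applies directly and produces a (necessarily unique) morphism of Poisson groupoids $J:\Sigma(M)\to G^*$ whose Lie bialgebroid morphism is $j$.

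If one prefers an explicit construction rather than a black-box appeal, one can use the cotangent path description recalled in Section \ref{sec:background}. Given a cotangent path $a:[0,1]\to T^*M$, the composition $j\circ a$ is a $\gg^*$-path, i.e.\ a curve in $\gg^*$, which integrates in the 1-connected Lie group $G^*$ to a unique curve $g(t)$ starting at $e$ whose right-logarithmic derivative equals $j(a(t))$. Setting $J([a]):=g(1)$ defines a candidate map $\Sigma(M)\to G^*$; the content of the cited theorem is precisely that this is well defined on cotangent homotopy classes, is a Lie groupoid morphism, and is Poisson. The potential obstacle here is not a conceptual difficulty but a verification one: checking well-definedness and the Poisson property directly amounts to re-proving Xu's theorem in this particular case, which is why I would rely on the cited result.
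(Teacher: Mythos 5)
Your proposal matches the paper's own argument: the paper likewise applies Proposition \ref{prop:Poisson:actions:2} to obtain the Lie bialgebroid morphism $j:(T^*M,TM)\to(\gg^*,\gg)$ and then integrates it via Theorem 5.5 of \cite{Xu0}, using that $\Sigma(M)$ is source 1-connected; it also records the same cotangent-path formula $J([a])=[j\circ a]$ that you describe. The proof is correct and essentially identical to the one in the paper.
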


At the level of cotangent paths, the map $J$ is simply given by the formula:
\[ J([a])=[j\circ a]\]
(see \cite{CrFe}, where it is explained how to integrate morphisms
of Lie algebroids to morphisms of Lie groupoids in terms of
cotangent paths).

Since $J:\Sigma(M)\to G^*$ is a Poisson map and a groupoid morphism, we conclude from Proposition \ref{prop:main:aux} that:

\begin{prop}
\label{prop:local:act}
Let $\Psi:G\times M\to M$ be a Poisson action of a Poisson-Lie
group $(G,\pi_G)$ on a Poisson manifold $(M,\pi)$. There exists a
local hamiltonian action of $G$ on $\Sigma(M)$ with momentum map
$J:\Sigma(M)\to G^*$ which satisfies the infinitesimal twisted
multiplicativity property \eqref{eq:twisted:inft}.
\end{prop}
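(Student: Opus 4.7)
The plan is to assemble three ingredients already established in the text. First, Corollary \ref{cor:integ:J} produces a Poisson groupoid morphism $J:\Sigma(M)\to G^*$ integrating $j:T^*M\to\gg^*$. Second, the construction recalled at the end of Section \ref{subsec:hamiltonian} states that any Poisson map $\mu$ into the dual Poisson-Lie group $G^*$ determines a local hamiltonian action via $\psi(\xi):=\pi^\sharp(\mu^*\xi^R)$. Third, the computation from the proof of Proposition \ref{prop:main:aux} shows that when the momentum map is a groupoid morphism, the infinitesimal twisted multiplicativity \eqref{eq:twisted:inft} holds.

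Concretely, I would first apply Corollary \ref{cor:integ:J} to obtain $J:\Sigma(M)\to G^*$, which is automatically a Poisson map and a groupoid morphism; note also that $J$ sends the identity section of $\Sigma(M)$ to $e\in G^*$, since any groupoid morphism into the Poisson-Lie group (viewed as a groupoid over a point) carries units to $e$. I would then apply the Lu-type construction with $\mu=J$ on the symplectic manifold $(\Sigma(M),\Omega)$, defining the infinitesimal action
\[ \psi(\xi):=\pi_{\Sigma(M)}^\sharp(J^*\xi^R),\quad \xi\in\gg, \]
where $\pi_{\Sigma(M)}=\Omega^{-1}$. By that result, $\psi$ is an infinitesimal Poisson action integrating to a local hamiltonian $G$-action on $\Sigma(M)$ with equivariant momentum map $J$.

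Finally, I would verify the infinitesimal twisted multiplicativity \eqref{eq:twisted:inft}. The key point is that the implication ``$J$ is a groupoid morphism $\Rightarrow$ \eqref{eq:twisted:inft}'' in the proof of Proposition \ref{prop:main:aux} is a purely pointwise computation: it uses only that $\pi_{\Sigma(M)}^\sharp:T^*\Sigma(M)\to T\Sigma(M)$ is a groupoid morphism (by multiplicativity of $\pi_{\Sigma(M)}$), the cotangent-groupoid identity $J^*(\theta_1\star_{G^*}\theta_2)=(J^*\theta_1)\star_{\Sigma(M)}(J^*\theta_2)$, and the decomposition $(\xi^R)_{u\cdot v}=(\xi^R)_u\star_{G^*}(\Ad^*u\cdot\xi^R)_v$ of right-invariant forms on $G^*$ along a product. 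None of these requires the vector fields $\psi(\xi)$ to be complete, so the same chain of equalities applies verbatim and yields \eqref{eq:twisted:inft} in our local setting.

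The main subtlety, and I expect the only one, is bookkeeping around the local nature of the action: since no completeness hypothesis on $G$ is available here, the conclusion must be stated as the pointwise identity of vector fields \eqref{eq:twisted:inft}, rather than the integrated form \eqref{eq:twisted:2} which would require completeness to make sense globally.
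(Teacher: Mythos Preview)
Your proposal is correct and follows essentially the same route as the paper: the paper simply states that since $J:\Sigma(M)\to G^*$ is a Poisson map and a groupoid morphism (Corollary~\ref{cor:integ:J}), the conclusion follows from Proposition~\ref{prop:main:aux}. You have spelled out the two implicit steps---Lu's construction of the local hamiltonian action from the Poisson map $J$, and the observation that the computation in the (i)$\Rightarrow$(ii) direction of Proposition~\ref{prop:main:aux} is pointwise and needs no completeness of $G$---which is exactly what is needed to make that one-line deduction rigorous.
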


Later, we will give an explicit expression for this local action (see Remark \ref{rem:local:action}).
The following example shows that, in general, the lifted action will not be a \emph{global action}.

\begin{ex}
Let $G$ be any Poisson-Lie group which is not complete. The action of $G$ on itself by left translations
$G\times G\to G$ is a Poisson action. The lifted (local) action on $\Sigma(G)$ is not a global action. In fact,
observe that the identity $e\in G$ is a fixed point for the Poisson structure where the isotropy Lie algebra is
$\gg^*$. Hence, the corresponding isotropy group is:
\[ \Sigma(G)_e=\s^{-1}(e)=\t^{-1}(e)\simeq G^*.\]
The restriction of $J:\Sigma(G)\to G^*$ to this isotropy group is an isomorphism, so if the lifted action were a
global action, the dressing action would have to be complete.
\end{ex}

%%%%%%%%%%%%%%%%%%%%%%%%%%%%%%%%%%%%%%%%%%%%%%%%%%%%%%%%%%%%%%%%
\subsection{Lifting to global Poisson actions}                 %
\label{subsec:lifted:action}                                   %
%%%%%%%%%%%%%%%%%%%%%%%%%%%%%%%%%%%%%%%%%%%%%%%%%%%%%%%%%%%%%%%%

Our main result states that if $(G,\pi_{G})$ is a complete Poisson-Lie group, then the lifted action is a global
action. In the sequel, we will assume that $G$ is complete and will denote by $g^u$ the right dressing action of
an element $u\in G^*$ on an element $g\in G$.

\begin{thm}
\label{thm:lift:actions} Let $(G,\pi_G)$ be a complete Poisson-Lie
group, $(M,\pi)$ an integrable Poisson manifold and $G\times M\to
M$ a Poisson action. Let $J:\Sigma(M)\to G^*$ be the integration
of $j:T^*M\to\gg^*$. Then there exists a lifted hamiltonian action
of $(G,\pi_G)$ on the symplectic groupoid $\Sigma(M)$ with
momentum map $J$, such that:
\begin{enumerate}
\item $J$ is equivariant:
\[ J(g\,x)=g\,J(x),\qquad (g\in G, x\in\Sigma(M)). \]
\item The action is twisted multiplicative:
\[
g\,(x\cdot y)=(g\, x)\cdot (g^{J(x)}\, y),\qquad (g\in G,(x,y)\in\Sigma(M)^{(2)}).
\]
\end{enumerate}
\end{thm}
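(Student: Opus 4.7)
The plan is to construct the lifted $G$-action on $\Sigma(M)$ explicitly at the level of cotangent paths, using completeness of $G$ to globalize the formula for $\bar x(t)$ that arose in the proof of Proposition~\ref{prop:general}, and then to derive the listed properties from the results already established in the paper.

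Represent $x\in\Sigma(M)$ by a cotangent path $a\colon I\to T^*M$ with base path $\gamma = p\circ a$. The truncations $x_t:=[a|_{[0,t]}]$ form a source path from $1_{\gamma(0)}$ to $x$, and $u(t):=J(x_t)=[j\circ a|_{[0,t]}]$ is a smooth path in $G^*$ from $e$ to $J(x)$. Completeness of $G$ ensures that
\[
g_t := g^{\,J(x)\,u(t)^{-1}} \in G
\]
is a globally well-defined smooth family with $g_0=g^{J(x)}$ and $g_1=g$. Translating the formula $\bar x(t)=g^{J(x)J(x(t))^{-1}}\cdot x(t)$ from the proof of Proposition~\ref{prop:general} to the cotangent path model, I define a new path $(g\cdot a)(t)$ by applying the dressing-twist $g_t$ to the covector $a(t)\in T^*_{\gamma(t)}M$, using the $G$-action on $M$ and its canonical lift to $T^*M$.

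\textbf{Step 1 (Well-definedness).} Verify that $g\cdot a$ is again a cotangent path, i.e.\ that its base path satisfies $\tfrac{\d}{\d t}(g_t\cdot\gamma(t))=\pi^{\sharp}((g\cdot a)(t))$, and that the assignment $[a]\mapsto[g\cdot a]$ descends through cotangent homotopies to a smooth self-map of $\Sigma(M)$. This is the heart of the argument: the $g_t$-twist is exactly what compensates for the failure of $\Psi_g$ to preserve $\pi$.

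\textbf{Step 2 (Group law).} Check that $(gh)\cdot[a]=g\cdot(h\cdot[a])$ and $e\cdot[a]=[a]$. The composition identity reduces to the twisted multiplicativity of the right dressing action $G^*\times G\to G$ (the $G$-counterpart of~\eqref{eq:twist:dress}).

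\textbf{Step 3 (Momentum map).} By construction, on a neighborhood of $e\in G$ the formula coincides with the local hamiltonian action of Proposition~\ref{prop:local:act}; consequently the global action is hamiltonian with momentum map $J$. Equivariance of $J$ then follows from $J$ being a Poisson map and the action being Poisson, via the equivariance criterion recalled in Section~\ref{subsec:hamiltonian}.

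\textbf{Step 4 (Twisted multiplicativity).} Since $J\colon\Sigma(M)\to G^*$ is a Poisson groupoid morphism (Corollary~\ref{cor:integ:J}), vanishes on units, and the action is hamiltonian with momentum map $J$, Proposition~\ref{prop:main:aux} yields the twisted multiplicativity property~\eqref{eq:twisted:2}.

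The main obstacle is Step~1: verifying that the dressing-twisted formula takes cotangent paths to cotangent paths and respects cotangent homotopies. This requires carefully matching the twist by $u(t)$ with the bialgebroid structure via $j$; the key input is that $j\colon T^*M\to\gg^*$ is a Lie bialgebroid morphism (Proposition~\ref{prop:Poisson:actions:2}), so that the pointwise $g_t$-twist furnishes exactly the correction needed to interchange the $G$-action on $T^*M$ with $\pi^{\sharp}$. Once Step~1 is in place, completeness of $G$ promotes what would otherwise be merely the local lift of Proposition~\ref{prop:local:act} into a genuine global action, and the remaining items follow from results established earlier in the paper.
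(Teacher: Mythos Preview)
Your construction is exactly the paper's: act on cotangent paths by the dressing-twisted formula $\bar a(t)=g^{J(x)J(x(t))^{-1}}\cdot a(t)$ and then verify the properties. But the ordering of your steps creates two genuine problems.

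First, Step~2 cannot be carried out before equivariance. To compute $g\cdot(h\cdot[a])$ you must apply the formula with $x$ replaced by $h\cdot x$, and that requires knowing $J(h\cdot x)$ and $J\bigl((h\cdot x)(t)\bigr)$ --- precisely the equivariance identities you have postponed to Step~3. The paper therefore proves equivariance \emph{directly and first}: it writes the dressing action of $G$ on $G^*$ in the $\gg^*$-path model (via the formula at the end of Proposition~\ref{prop:general}) and computes $J(g\cdot x)=[j(g_t\cdot a(t))]=[\Ad^*g_t\cdot j(a(t))]=g\cdot J(x)$. Only with this in hand does the group-law verification go through. Your alternative route to equivariance via the Poisson-map criterion is fine in isolation, but placing it after Step~2 is circular.

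Second, ``by construction \dots\ coincides with the local hamiltonian action'' in Step~3 is not an observation but the crux. The local action of Proposition~\ref{prop:local:act} is \emph{defined} by its infinitesimal generators $\psi(\xi)=\pi^\sharp(J^*\xi^R)$, whereas your action is defined by the cotangent-path formula; nothing so far identifies them. The paper closes this gap using the uniqueness clause of Proposition~\ref{prop:general}: both the local hamiltonian action and the constructed global action satisfy twisted multiplicativity and both induce the cotangent-lifted $G$-action on $T^*M=A(\Sigma(M))$, hence they coincide. Note that this argument already delivers twisted multiplicativity as an input, so your Step~4 via Proposition~\ref{prop:main:aux} is redundant (though not wrong).
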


\begin{proof}
Let $a:I\to T^*M$ be a cotangent path and define a new path $\bar{a}:I\to T^*M$ by:
\begin{equation}
\label{cotg:path}
\overline{a}(t):=g^{J(x)J(x(t))^{-1}}\, a(t).
\end{equation}
In this formula, we use the lifted cotangent action of $G$ on
$T^*M$ and $x(t)$ denotes the element in $\Sigma(M)$ which, for a
fixed $t\in I$, is defined by the cotangent path $s\mapsto t\,a(st)$. The motivation
for this definition can be found in the proof of Proposition \ref{prop:general}.

One now checks that:
\begin{enumerate}[(a)]
\item For any cotangent path $a(t)$, the path $\bar{a}(t)$ defined by \eqref{cotg:path}
is also a cotangent path.
\item If $a_{\eps}$ is a cotangent homotopy, then the corresponding family $\bar{a}_\eps$
defined by \eqref{cotg:path} is also a cotangent homotopy.
\end{enumerate}

This means that formula \eqref{cotg:path} leads to a map $G\times \Sigma(M)\to\Sigma(M)$ by setting
at the level of cotangent homotopy classes:
\begin{equation}
\label{eq:action} g\, [a(t)]=[g^{J(x)J(x(t))^{-1}}\, a(t)].
\end{equation}

Using this formula, we will show that $J:\Sigma(M)\to G^*$ is $G$-equivariant, i.e., that:
\begin{equation}
\label{eq:equivariance} J(g\, x)=g\, J(x), \quad (x\in\Sigma(M),\
g\in G),
\end{equation}
where on the left-hand side $g$ acts by \eqref{eq:action} and on the
right-hand side $g$ acts by the (left) dressing action on $G^*$.
%%?

In order to prove \eqref{eq:equivariance} one starts by remarking that, since $G^*$ is simply-connected, one can
identify $G^*$ with paths $\xi:I\to \gg^*$ up to $\gg^*$-homotopy. This is a special instance of the general
construction mentioned in the proof of Proposition \ref{prop:general}: given an element $u\in G^*$ we first
identify it with (the homotopy class of) a path $u(t)\in G^*$ starting at the identity $u(0)=e$ and ending at
$u(1)=u$. Then we associate to it (the $\gg^*$-homotopy class of) a path in the Lie algebra $\xi:I\to \gg^*$ by
setting:
\[ \xi(t)=\left.\frac{\d}{\d s}u(s)u(t)^{-1}\right|_{s=t}.\]
By the formula proved at the end of Proposition \ref{prop:general}, under this identification, the dressing
action $G\times G^*\to G^*$ is given at the level of $\gg^*$-paths by:
\[ g\, [\xi(t)]=[\Ad^*g^{u(1)u(t)^{-1}}\, \xi(t)].\]
Using this relation, we see that if $x=[a(t)]\in\Sigma(M)$ and $g\in G$, then:
\begin{align*}
J(g\, x)&=[j(g^{J(x)J(x(t))^{-1}}\, a(t))]\\
&=[\Ad^*g^{J(x)J(x(t))^{-1}}\, j(a(t))]\\
&=g\, [j(a(t))]=g\, J(x),
\end{align*}
so the equivariance follows.

We now check that \eqref{eq:action} defines a $G$-action, i.e., that:
\begin{enumerate}[(a)]
\item If $e\in G$ is the identity element, then $e\, x=x$, for all
$x\in\Sigma(M)$; \item If $g,h\in G$, then $g\,(h\, x)=(gh)\, x$,
for all $x\in\Sigma(M)$;
\end{enumerate}

In order to prove that (a) holds, one observes that the identity element $e\in G$ is fixed by the right dressing
action of $G^*$, so if $x=[a(t)]\in\Sigma(M)$ we find:
\[ e\, [a(t)]=[e^{J(x)J(x(t))^{-1}}\, a(t)]=[a(t)],\]
and (a) follows.

To prove (b), we first observe that if $x=[a(t)]\in\Sigma(M)$ so
that $h\, x=[h^{J(x)J(x(t))^{-1}}\, a(t)]$, then:
\[ J(h\, x)=h\, J(x),\quad J((h\, x)(t))=h^{J(x)J(x(t))^{-1}}J(x(t)).\]
It then follows from the twisted multiplicativity of the left dressing action of $G$ on $G^*$ that:
\begin{align*}
g\, (h\, x)&=g\,(h\, [a(t)])\\
&=[g^{J(h\, x)J((h\, x)(t))^{-1}}h^{J(x)J(x(t))^{-1}}\, a(t)]\\
&=[g^{(h\, J(x))(h^{J(x)}\, J(x(t))^{-1})}h^{J(x)J(x(t))^{-1}}\, a(t)]\\
&=[g^{h\, (J(x) J(x(t))^{-1})}h^{J(x)J(x(t))^{-1}}\, a(t)].
\end{align*}
Using the twisted multiplicativity of the right dressing action of $G^*$ on $G$ we obtain:
\begin{align*}
g\,(h\, x)&=g\,(h\, [a(t)])\\
&=[g^{h\, (J(x) J(x(t))^{-1})}h^{J(x)J(x(t))^{-1}}\, a(t)]\\
&=[(gh)^{J(x)J(x(t))^{-1}}\, a(t)]\\
&=(gh)\,[a(t)]=(gh)\, x,
\end{align*}
so (b) holds.

Finally, we need to check that the $G$-action \eqref{eq:action} is hamiltonian with momentum map $J$. By
Proposition \ref{prop:local:act} we have a local hamiltonian $G$-action on $\Sigma(M)$ with momentum map $J$.
This local action satisfies the twisted multiplicativity property and the induced action on the Lie algebroid
$A(\Sigma(M))=T^*M$ is the cotangent lifted action. On the other hand, the $G$-action \eqref{eq:action} also
satisfies the twisted multiplicativity property and induces the same action on $T^*M$. By the uniqueness
property proved in Proposition \ref{prop:general}, the two actions must coincide, so we conclude that the lifted
action is hamiltonian with momentum map $J$.
\end{proof}

\begin{rem}
\label{rem:local:action} Assume that $G$ is not a complete Poisson-Lie group. The identity $e\in G$ is a fixed
point for the infinitesimal right dressing action of $G^*$ on $G$. It follows that, for any element $u\in G^*$,
the (local) dressing action $g^u$ is defined for $g$ in a sufficiently small neighborhood of $e\in G$ (which
depends on $u$). Similarly, if $u:I\to G^*$ is any path starting at $u(0)=e\in G^*$, a compactness argument
shows that $g^{u(1) u(t)^{-1}}$ is defined provided $g$ is sufficiently close to $e\in G^*$. It follows that,
when $G$ is not complete, formula \eqref{eq:action} still defines a \emph{local action} of $G$ on $\Sigma(M)$.
The proof of the Theorem shows that this action will be twisted multiplicative and hamiltonian, with momentum
map $J:\Sigma(M)\to G^*$. In other words, it is the local action given by Proposition \ref{prop:local:act}.
\end{rem}

%%%%%%%%%%%%%%%%%%%%%%%%%%%%%%%%%%%%%
%%%%%%%%%%%%%%%%%%%%%%%%%%%%%%%%%%%%%
%%%%%%%%%%%%%%%%%%%%%%%%%%%%%%%%%%%%%
\section{Applications}              %
\label{sec:applications}            %
%%%%%%%%%%%%%%%%%%%%%%%%%%%%%%%%%%%%%
%%%%%%%%%%%%%%%%%%%%%%%%%%%%%%%%%%%%%
%%%%%%%%%%%%%%%%%%%%%%%%%%%%%%%%%%%%%

In this section we will consider three applications of Theorem
\ref{thm:lift:actions}. The first application is to the
integration of a Poisson quotient $M/G$. The second application
concerns hamiltonian actions and the integration of
Lu-Weinstein quotients. The third application is to the
integration of Poisson homogeneous spaces.

%%%%%%%%%%%%%%%%%%%%%%%%%%%%%%%%%%%%%%%%%%%%%%%%%%%%%%%%%%%%%%%%
\subsection{Integrability of Poisson quotients}                %
\label{subsec:quotients}                                       %
%%%%%%%%%%%%%%%%%%%%%%%%%%%%%%%%%%%%%%%%%%%%%%%%%%%%%%%%%%%%%%%%

We now turn to the integrability of Poisson quotients. We start with the following remark:

\begin{prop}
\label{lift:actions:proper}
Let $G\times M\to M$ be a Poisson action of a complete Poisson-Lie group
on an integrable Poisson manifold. The lifted action $G\times\Sigma(M)\to
\Sigma(M)$ is proper (respectively, free) if and only if the
original action $G\times M\to M$ is proper (respectively, free).
\end{prop}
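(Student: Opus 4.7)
The proposition is essentially a consequence of the compatibility of the lifted action with the target map and the identity section, which were established in the proof of Proposition \ref{prop:general} (and used again in Theorem \ref{thm:lift:actions}). Specifically, for $g\in G$ and $x\in\Sigma(M)$, one has the identities
\[
g\cdot 1_m=1_{g\cdot m},\qquad \t(g\cdot x)=g\cdot \t(x),
\]
so that the identity embedding $\eps\colon M\hookrightarrow \Sigma(M)$ is $G$-equivariant and the target $\t\colon\Sigma(M)\to M$ intertwines the lifted and original actions. (Note that the corresponding statement for $\s$ involves the twisting by $J(x)$ and is therefore not useful here.) The plan is to combine these two facts with the usual sequential/compact criteria for freeness and properness.

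\textbf{Freeness.} For the forward direction, suppose the lifted action is free and $g\cdot m=m$ for some $m\in M$, $g\in G$. Then $g\cdot 1_m=1_{g\cdot m}=1_m$, hence $g=e$. Conversely, if the original action is free and $g\cdot x=x$ for some $x\in\Sigma(M)$, apply $\t$ to obtain $g\cdot \t(x)=\t(g\cdot x)=\t(x)$, so $g=e$.

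\textbf{Properness.} For the forward direction, suppose the lifted action is proper and take a sequence $(g_n,m_n)\in G\times M$ with $(g_n\cdot m_n,m_n)\to (m',m)$ in $M\times M$. The equivariance of $\eps$ gives $g_n\cdot 1_{m_n}=1_{g_n\cdot m_n}\to 1_{m'}$ and $1_{m_n}\to 1_m$ in $\Sigma(M)$, so $(g_n\cdot 1_{m_n},1_{m_n})$ converges in $\Sigma(M)\times\Sigma(M)$, whence $(g_n,1_{m_n})$ admits a convergent subsequence by properness of the lifted action; in particular $g_n$ does. Conversely, assume the original action is proper and take $(g_n,x_n)$ with $(g_n\cdot x_n,x_n)\to(x',x)$ in $\Sigma(M)\times\Sigma(M)$. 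Applying $\t$ and using equivariance,
\[
(g_n\cdot\t(x_n),\t(x_n))=(\t(g_n\cdot x_n),\t(x_n))\To (\t(x'),\t(x)).
\]
Properness of the $G$-action on $M$ yields a subsequence along which $g_n\to g\in G$, and since $x_n\to x$ is already known, the corresponding subsequence of $(g_n,x_n)$ converges in $G\times\Sigma(M)$.

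There is no real obstacle here: the argument is purely formal once one has the equivariance of $\eps$ and $\t$. The only point that deserves attention is remembering that the source map $\s$ is \emph{not} $G$-equivariant (by \eqref{eq:twisted:2}), so all the bookkeeping must be done through $\t$ and the unit section.
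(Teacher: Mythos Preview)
Your proof is correct and follows essentially the same approach as the paper: both arguments rely on the $G$-equivariance of the identity section $\eps:M\to\Sigma(M)$ and the target map $\t:\Sigma(M)\to M$, and deduce the result from the general principle that freeness and properness pull back along equivariant maps. The paper simply states this principle as a lemma, whereas you unfold it via the sequential criterion, but the content is identical.
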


\begin{proof}
For the proof, we use the following simple fact: Let a Lie group $G$ act
smoothly on manifolds $P$ and $Q$ and let $\phi:P\to Q$ be a $G$-equivariant map.
If the action on $Q$ is free (respectively, proper), then the action on $P$ is free
(respectively, proper).

Now, we just need to observe that the identity section $\eps:M\to\Sigma(M)$ and the
target map $\t:\Sigma(M)\to M$ are $G$-equivariant maps.
\end{proof}

From now on we will assume that the action $G\times M\to M$ is proper and free, so that the
lifted action is also proper and free. For a Poisson action $G\times M\to M$
the space of $G$-invariant functions is a Poisson subalgebra $C^\infty(M)^G\subset C^\infty(M)$. It follows
that if the action is proper and free, this space can be identified with $C^\infty(M/G)$, so
that $M/G$ has a reduced Poisson structure $\pi_\text{red}$ such that the quotient map $M\to M/G$ is
a Poisson map.

Using the lifted action one can construct a symplectic groupoid integrating $M/G$:

\begin{thm}
\label{thm:int:M/G} Let $(G,\pi_G)$ be a complete Poisson-Lie group, $(M,\pi)$ an integrable Poisson manifold
and $G\times M\to M$ a Poisson action which is proper and free. Then the symplectic reduced space
$(J^{-1}(e)/G,\Omega_\text{red})$ is a symplectic groupoid over $M/G$ which integrates the reduced Poisson
manifold $(M/G,\pi_\text{red})$.
\end{thm}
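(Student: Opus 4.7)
The plan is to realize $J^{-1}(e)/G$ as a Marsden--Weinstein quotient of $(\Sigma(M),\Omega)$ that simultaneously inherits a Lie groupoid structure from $J^{-1}(e)\tto M$, and then to identify its base Poisson structure with $\pi_{\text{red}}$.

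First, I would check that $J^{-1}(e)\subset\Sigma(M)$ is a wide Lie subgroupoid. Because the action on $M$ is free, the infinitesimal generators are pointwise independent, so the Lie algebroid morphism $j:T^*M\to\gg^*$ is fiberwise surjective. Since $J$ integrates $j$, the Poisson groupoid morphism $J:\Sigma(M)\to G^*$ is a submersion, so $J^{-1}(e)$ is embedded in $\Sigma(M)$. Being the preimage of the identity $e\in G^*$ under a groupoid morphism, it is a wide Lie subgroupoid with Lie algebroid $\ker j\subset T^*M$.

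Second, I would observe that $G$ acts on $J^{-1}(e)$ by genuine Lie groupoid automorphisms. Equivariance of $J$ (Theorem \ref{thm:lift:actions}) and the fact that $e\in G^*$ is fixed by the left dressing action imply that $G$ preserves $J^{-1}(e)$; on this subgroupoid the twisting collapses, $g^{J(x)}=g^e=g$, so twisted multiplicativity reduces to the honest identity $g\cdot(x\cdot y)=(g\cdot x)\cdot(g\cdot y)$. By Proposition \ref{lift:actions:proper} the restricted action remains proper and free, and freeness yields the identification $J^{-1}(e)^{(2)}/G\cong(J^{-1}(e)/G)^{(2)}$, so $J^{-1}(e)/G\tto M/G$ inherits a Lie groupoid structure of dimension $2\dim(M/G)$.

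Third, I would apply Marsden--Weinstein reduction at the fixed point $e$ (whose dressing-isotropy is all of $G$): the pullback $i^*\Omega$ is $G$-invariant with null distribution equal to the tangent space to the $G$-orbits, so it descends to a symplectic form $\Omega_{\text{red}}$ on $J^{-1}(e)/G$. Multiplicativity descends from the identity $\m^*\Omega=\pi_1^*\Omega+\pi_2^*\Omega$ pulled back to $J^{-1}(e)^{(2)}$: the groupoid operations are $G$-equivariant for the diagonal action because the action is by groupoid morphisms, and $i^*\Omega$ is $G$-invariant, so the relation passes to the quotient.

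Finally, to match the base Poisson structure with $\pi_{\text{red}}$, I would use that $p\circ\s:\Sigma(M)\to M/G$ is Poisson (as both $\s$ and $p$ are), and the standard Marsden--Weinstein identity $\{\bar\s^*f,\bar\s^*g\}_{\Omega_{\text{red}}}=\bar\s^*\{f,g\}_{\pi_{\text{red}}}$ on $G$-invariant functions forces the reduced source $\bar\s:J^{-1}(e)/G\to M/G$ to be Poisson with respect to $\pi_{\text{red}}$. Since the Poisson structure on the base of a symplectic groupoid is uniquely characterized by this property, $J^{-1}(e)/G$ integrates $(M/G,\pi_{\text{red}})$.

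The main obstacle will be the simultaneous descent in the third step: ensuring that closedness, non-degeneracy, and multiplicativity of $\Omega$ all transfer to $\Omega_{\text{red}}$ hinges on the action being by \emph{honest} groupoid automorphisms on $J^{-1}(e)$, which is precisely what the choice of the dressing-fixed value $e$ secures.
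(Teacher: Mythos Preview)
Your proposal is correct and follows essentially the same route as the paper's proof: show $J^{-1}(e)$ is a Lie subgroupoid (via $J$ being a submersion), observe that the twisted multiplicativity collapses on $J^{-1}(e)$ so that $G$ acts by honest groupoid automorphisms, apply Lu--Weinstein reduction to obtain $\Omega_{\text{red}}$, and check that multiplicativity descends so that the source map is Poisson onto $(M/G,\pi_{\text{red}})$. You supply more detail in several places (notably the reason the twisting disappears and the verification that multiplicativity passes to the quotient), but the architecture is identical.
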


\begin{proof}
By Proposition \ref{lift:actions:proper}, the lifted action of $G$ on $\Sigma(M)$ is proper and free. It follows
that its momentum map $J:\Sigma(M)\to G^*$ is a groupoid morphism which is a submersion. Therefore, its kernel
$J^{-1}(e)\subset\Sigma(M)$ is a Lie subgroupoid. The equivariance of the momentum map implies that $J^{-1}(e)$
is $G$-invariant. Moreover, the twisted multiplicativity property \eqref{eq:twisted} guarantees that $G$ acts on
$J^{-1}(e)$ by groupoid automorphisms. We conclude that the quotient $J^{-1}(e)/G$ is a Lie groupoid over $M/G$.

Now, by the Lu-Weinstein reduction theorem, there exists a reduced symplectic form $\Omega_\text{red}$ on
$J^{-1}(e)/G$. It follows from the multiplicativity of $\Omega$ that $\Omega_\text{red}$ is a multiplicative
2-form, so that $(J^{-1}(e)/G,\Omega_\text{red})$ is a symplectic groupoid over $M/G$ and the source map
$s:J^{-1}(e)/G\to M/G$ is a Poisson morphism. Hence, $(J^{-1}(e)/G,\Omega_\text{red})$ integrates
$(M/G,\pi_\text{red})$.
\end{proof}

Theorem \ref{thm:int:M/G} raises a natural question: does $\Sigma(M/G)$ coincide
with the symplectic reduction:
\[ \Sigma(M)//G:=J^{-1}(e)/G\, ? \]
Since we already know that this is a symplectic groupoid integrating the quotient $M/G$, the question amounts to
deciding whether the source fibers are 1-connected or not. This problem can be handled by the method used in
\cite{FerOrRa} for the case $\pi_G=0$.

First, we observe that the source fibers of $J^{-1}(e)$
need not be connected. Let $J^{-1}(e)^0$ be the
unique source connected Lie subgroupoid of $\Sigma (M)$
integrating the Lie algebroid $j^{-1}(0)\subset T^*M$:
\[
J^{-1}(e)^0=\{[a]\in \Sigma (M): j(a(t))=0,\forall t\in[0,1]\}.
\]

If $G$ is connected,  $J^{-1}(e)^0$ is $G$-invariant. The action of $G$ on $J^{-1}(e)^0$ is by automorphisms, so
we have a groupoid morphism $\Phi:J^{-1}(e)^0 \to J^{-1}(e)^0/G$ which induces the Lie algebroid morphism
$\phi:j^{-1}(0)\to j^{-1}(0)/G\cong T^*(M/G)$. On the other hand, the Lie algebroid morphism $\phi$ integrates
to a morphism of source 1-connected groupoids $\widehat{\Phi}: \G(j^{-1}(0)) \to \Sigma (M/G)$ which covers the
homomorphism $\Phi$ (here, $\G (j^{-1}(0))$ denotes the source 1-connected groupoid integrating $j^{-1}(0)$).
This yields a commutative diagram:
\begin{equation}
\label{eq:diag:com:symp}
\newdir{ (}{{}*!/-5pt/@^{(}}
\xymatrix{
K_M\ar[d]^{\widehat{\Phi}}\ar@<-2pt>@{ (->}[r]& \G(j^{-1}(0))\ar[d]^{\widehat{\Phi}}\ar[r]^{\widehat{p}}& J^{-1}(e)^0\ar[d]^{\Phi}\\
K_{M/G}\ar@<-2pt>@{ (->}[r]& \Sigma (M/G)\ar[r]^p& J^{-1}(e)^0/G}
\end{equation}
where $K_M$ and $K_{M/G}$ are group bundles over $M$ and $M/G$, respectively, with discrete fibers. Now the same
argument as in \cite{FerOrRa} shows that the group bundle $K_M$ measures how symplectization and reduction fail
to commute. More precisely, we recover \cite[Proposition 5.3]{Ste}:

\begin{thm}
\label{thm:int:reduct:alt} Let $(G,\pi_G)$ be a connected complete
Poisson-Lie group, $(M,\pi)$ an integrable Poisson manifold and
$G\times M\to M$ a Poisson action which is proper and free. Then
symplectization and reduction commute if and only if the discrete
groups
\[
  K_{m}:=\frac{\{a:I\to j^{-1}(0) \mid a \text{\rm ~is a cotangent loop such that }a\sim 0_m\}}
   {\{\text{\rm cotangent homotopies with values in } j^{-1}(0)\}}
\]
are trivial, for all $m\in M$.
\end{thm}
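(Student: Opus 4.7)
The plan is to follow the line of argument used in \cite{FerOrRa} for the case of an ordinary Poisson action ($\pi_G=0$), adapted to the twisted setting via the explicit description of the lifted action given by Theorem \ref{thm:lift:actions}. The first step is to identify the discrete group $K_m$ with the isotropy at $m$ of the kernel of the covering morphism $\widehat{p}: \G(j^{-1}(0)) \to J^{-1}(e)^0$. This is essentially tautological: both groupoids are constructed from cotangent paths with values in $j^{-1}(0)$, but $\G(j^{-1}(0))$ identifies two such paths when they are joined by a cotangent homotopy inside $j^{-1}(0)$, while $J^{-1}(e)^0 \subset \Sigma(M)$ uses the coarser equivalence of cotangent homotopy in $T^*M$. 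The fiber of $\ker\widehat{p}$ at $m$ is therefore exactly $K_m$, so $K_M$ is the monodromy bundle of the subalgebroid $j^{-1}(0) \subset T^*M$, and it is trivial at $m$ precisely when the source fiber of $J^{-1}(e)^0$ at $m$ is 1-connected.

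Next I would restate the commutation property. ``Symplectization and reduction commute'' is the assertion $\Sigma(M/G) \cong \Sigma(M)//G = J^{-1}(e)/G$; since both groupoids have Lie algebroid $T^*(M/G)$ and $\Sigma(M/G)$ is source 1-connected, the map $p$ in \eqref{eq:diag:com:symp} is a covering of source-connected groupoids, and is an isomorphism if and only if its kernel $K_{M/G}$ is trivial. Thus the theorem will follow once I prove that the restriction of $\widehat{\Phi}$ gives an isomorphism of bundles of discrete groups $K_M \cong K_{M/G}$.

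To establish this isomorphism I would run the same diagram chase as in \cite{FerOrRa}. The map $\widehat{\Phi}$ integrates the surjective Lie algebroid morphism $\phi: j^{-1}(0) \to T^*(M/G)$; because the $G$-action is proper and free, $\widehat{\Phi}$ is itself a surjective submersion on source fibers, and its fibers are connected (they can be identified with the source fibers of a suitable action groupoid integrating $\ker\phi$, i.e., the covectors tangent to the $G$-orbits). Using the commutativity of \eqref{eq:diag:com:symp} one sees immediately that $\widehat{\Phi}(K_M) \subseteq K_{M/G}$; injectivity of this restriction follows from the fact that both sides are discrete and $\widehat{\Phi}$ is a local diffeomorphism on source fibers. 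For surjectivity, given a class $[\alpha] \in K_{M/G}$ at $[m]$ one lifts $\alpha$ through $\phi$ to a cotangent path $a$ in $j^{-1}(0)$; the endpoints of $a$ lie in the same $G$-orbit, and using the twisted multiplicativity of Theorem \ref{thm:lift:actions} together with the nullity of $[\alpha]$ in $\Sigma(M/G)$ one deforms $a$ to a genuine cotangent loop at $m$ whose class in $\Sigma(M)$ is trivial, i.e., a representative of an element of $K_m$ mapping to $[\alpha]$.

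The main obstacle is precisely this last step: the lift $a$ is only a cotangent path from $m$ to some $h\cdot m$, and converting it into a loop at $m$ whose image in $\Sigma(M)$ is trivial requires careful use of the twisted action formula \eqref{eq:action}, the freeness and properness of the $G$-action, and the compatibility between cotangent homotopy in $j^{-1}(0)$ and the $G$-orbits inside source fibers of $\G(j^{-1}(0))$. Once this adjustment is in place the rest of the argument is essentially formal, and the statement of the theorem follows.
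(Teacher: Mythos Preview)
Your approach is essentially the same as the paper's: the paper sets up the commutative diagram \eqref{eq:diag:com:symp}, identifies $K_M$ and $K_{M/G}$ as the kernels of $\widehat{p}$ and $p$, and then simply refers to \cite{FerOrRa} for the detailed argument, which is precisely the diagram chase you outline. One small point to tighten: the target of $p$ in \eqref{eq:diag:com:symp} is $J^{-1}(e)^0/G$, not $J^{-1}(e)/G=\Sigma(M)//G$, so to conclude that ``symplectization and reduction commute'' you must also argue (as in \cite{FerOrRa}) that triviality of $K_M$ forces $J^{-1}(e)^0=J^{-1}(e)$; otherwise your reformulation in the second paragraph has a gap.
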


We refer to \cite{FerOrRa} for a detailed proof.

%%%%%%%%%%%%%%%%%%%%%%%%%%%%%%%%%%%%%%%%%%%%%%%%%%%%%%%%%%%%%%%%
\subsection{Integration of hamiltonian actions}                %
\label{subsec:int:hamiltonian}                                 %
%%%%%%%%%%%%%%%%%%%%%%%%%%%%%%%%%%%%%%%%%%%%%%%%%%%%%%%%%%%%%%%%

Let us turn now to the study of hamiltonian actions $G\times M\to
M$. The following remark is due to Xu \cite{Xu0}:

\begin{prop}
If the action $G\times M\to M$ is a hamiltonian action with momentum map $\mu:M\to G^*$ , then the momentum map
of the lifted action $J:\Sigma(M)\to G^*$ is exact:
\[ J(x)=\mu(\t(x)\mu(\s(x))^{-1}. \]
\end{prop}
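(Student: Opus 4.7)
The plan is to define $\bar J:\Sigma(M)\to G^*$ by $\bar J(x) := \mu(\t(x))\mu(\s(x))^{-1}$ and to prove $\bar J=J$ by invoking the uniqueness of integrations of Lie bialgebroid morphisms out of the source 1-connected symplectic groupoid $\Sigma(M)$. By Corollary \ref{cor:integ:J}, $J$ is the unique Poisson groupoid morphism integrating $j:T^*M\to\gg^*$, so it suffices to check that $\bar J$ is a Lie groupoid morphism whose induced Lie algebroid morphism is $j$.

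Multiplicativity of $\bar J$ is immediate: $\bar J(1_m) = \mu(m)\mu(m)^{-1} = e$, and for any composable pair $(x,y)\in\Sigma(M)^{(2)}$ the relation $\s(x) = \t(y)$ makes the two middle factors in
\[
\bar J(x)\bar J(y) = \mu(\t(x))\mu(\s(x))^{-1}\mu(\t(y))\mu(\s(y))^{-1}
\]
telescope to $\mu(\t(x))\mu(\s(y))^{-1} = \bar J(x\cdot y)$.

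To identify $A(\bar J)$ with $j$ at the level of Lie algebroids, take $\al\in T^*_mM\cong A(\Sigma(M))_m$ and differentiate $\bar J$ at $1_m$ along a tangent vector in the source fiber whose anchor is $\pi^\sharp(\al)$. Standard formulas for the differentials of multiplication and inversion at the identity of $G^*$ produce a vector in $T_eG^* = \gg^*$ which, after pairing with $\xi\in\gg$ and using the right-invariance of $\xi^R$, equals $\langle \mu^*\xi^R,\pi^\sharp(\al)\rangle$. By the momentum-map identity \eqref{eq:momentum} this is $\al(\psi(\xi)) = \langle j(\al),\xi\rangle$ (modulo the paper's sign convention), so $A(\bar J) = j$, and uniqueness yields $\bar J = J$.

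The only real input beyond groupoid bookkeeping is the momentum-map equation \eqref{eq:momentum}; the main obstacle is tracking the sign and right-translation conventions for $\xi^R$ and for $j$. A parallel route that bypasses the linearization uses the cotangent-path description $J([a]) = [j\circ a]$: one verifies that the path $t\mapsto \mu(\gamma(t))\mu(\gamma(0))^{-1}$ in $G^*$, with $\gamma = p\circ a$, solves the right-logarithmic-derivative ODE with right-hand side $j(a(t))$ and initial value $e$, and concludes by uniqueness of ODE solutions that $J([a]) = \mu(\gamma(1))\mu(\gamma(0))^{-1} = \mu(\t[a])\mu(\s[a])^{-1}$.
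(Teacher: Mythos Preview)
The paper does not prove this proposition; it simply attributes the result to Xu \cite{Xu0}. Your argument is correct and is in fact the natural one in the paper's setup: by Corollary~\ref{cor:integ:J} the map $J$ is \emph{defined} as the unique groupoid morphism $\Sigma(M)\to G^*$ integrating $j$, so to obtain the explicit formula it suffices to check that $\bar J(x)=\mu(\t(x))\mu(\s(x))^{-1}$ is a groupoid morphism (your telescoping computation) inducing $j$ on the Lie algebroid. Your cotangent-path variant is equally valid and is perhaps the cleanest route, since it matches exactly the description $J([a])=[j\circ a]$ used throughout the paper.

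One small caution on the linearized version: the identification $A(\Sigma(M))\cong T^*M$ is via the symplectic form $\Omega$, not directly via the anchor, so when you write ``a tangent vector in the source fiber whose anchor is $\pi^\sharp(\al)$'' you are implicitly using that under this identification the anchor becomes $\pi^\sharp$. This is standard, but it is also where the sign ambiguity you flag actually lives: with the paper's conventions one has $(\mu^*\xi^R)(\pi^\sharp\al)=-\al(\pi^\sharp(\mu^*\xi^R))=-\al(\psi(\xi))$, and the compensating sign comes from the convention chosen for the isomorphism $\Ker\d\s|_M\cong T^*M$. If you want to avoid tracking this, the ODE/cotangent-path argument you sketch in the last paragraph sidesteps the issue entirely.
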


For our next proposition we need the definition of an \emph{action groupoid} which we now briefly recall. Let
$\Psi:G\times M\to M$ be an action of a Lie group $G$ on a manifold $M$ and let $\psi:\gg\to\X(M)$ be the
corresponding infinitesimal Lie algebra action. The action groupoid associated to $\Psi$ is the groupoid whose
elements are pairs $(g,m)\in G\times M$ viewed as arrows from $m$ to $g\cdot m$. We will denote it by $G\ltimes
M$. The associated Lie algebroid, denoted by $\gg\ltimes M$, is the trivial vector bundle $\gg\times M\to M$
with anchor map $(\xi,m)\mapsto \psi (\xi )_m$ and Lie bracket characterized by
\[ [\tilde{\xi},\tilde{\eta}]_A=\widetilde{[\xi ,\eta]},\quad (\xi,\eta\in\gg)\]
where $\tilde{\xi}\in \Gamma(\gg\ltimes M)$ denotes the constant section associated
with an element $\xi\in\gg$.

One checks easily that the momentum map $\mu:M\to G^*$ defines a Lie algebroid morphism
$\psi:\gg\ltimes M\to T^*M$ by setting:
\begin{equation}
\label{eq:momentum:ham} \psi(\xi,m) = (\mu^*\xi^R)_m.
\end{equation}
Now we have:

\begin{prop}
Assume that the Lie algebroid morphism $\psi:\gg\ltimes M\to T^*M$ defined by \eqref{eq:momentum:ham} integrates
to a groupoid morphism $\Psi:G\ltimes M\to \Sigma(M)$. Then the lifted $G$-action on $\Sigma(M)$ is a twisted
inner action, i.e., it is given by:
\begin{equation}
\label{eq:action:inner}
g\,x=\Psi(g,\t(x))\cdot x\cdot \Psi(g^{J(x)},\s(x))^{-1}.
\end{equation}
\end{prop}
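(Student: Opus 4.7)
The plan is to appeal to the uniqueness part of Proposition \ref{prop:general}. Since $\Sigma(M)$ is source 1-connected and the lifted action from Theorem \ref{thm:lift:actions} is twisted multiplicative, has groupoid-morphism momentum map $J$, and induces the cotangent-lifted $G$-action on $A(\Sigma(M))\cong T^*M$, it will be enough to verify that the formula
\[
g\star x:=\Psi(g,\t(x))\cdot x\cdot \Psi(g^{J(x)},\s(x))^{-1}
\]
defines a $G$-action on $\Sigma(M)$ satisfying the same three properties; the uniqueness statement then forces $g\star x=g\cdot x$.

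First I would handle the routine set-up. Because $\Psi$ is a groupoid morphism out of $G\ltimes M$, the arrow $\Psi(g,m)$ goes from $m$ to $g\cdot m$, so the three factors above compose and the result has source $g^{J(x)}\cdot\s(x)$ and target $g\cdot\t(x)$, matching the formulas for $\s(g\cdot x)$ and $\t(g\cdot x)$ derived in the proof of Proposition \ref{prop:general}. The identity axiom $e\star x=x$ is immediate from $\Psi(e,m)=1_m$ and $e^u=e$. For the composition law $g\star(h\star x)=(gh)\star x$, I would first check the equivariance $J(h\star x)=h\cdot J(x)$ using the exactness $J(y)=\mu(\t(y))\mu(\s(y))^{-1}$ together with equivariance of $\mu$, and then combine the groupoid-morphism identity $\Psi(g_1g_2,m)=\Psi(g_1,g_2\cdot m)\cdot\Psi(g_2,m)$ with the twisted multiplicativity \eqref{eq:twist:dress} of the left dressing action of $G$ on $G^*$ and with the analogous identity for the right dressing action of $G^*$ on $G$---the very same identities that are used in the associativity argument in the proof of Theorem \ref{thm:lift:actions}.

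Twisted multiplicativity of $\star$ is then the cleanest step. Using $J(x\cdot y)=J(x)\cdot J(y)$ and $(g^{J(x)})^{J(y)}=g^{J(x)\cdot J(y)}$, expanding both sides of $g\star(x\cdot y)=(g\star x)\cdot(g^{J(x)}\star y)$ reduces the identity to the single cancellation
\[
\Psi(g^{J(x)},\s(x))^{-1}\cdot\Psi(g^{J(x)},\t(y))=1_{\t(y)},
\]
which holds because $\s(x)=\t(y)$.

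The main obstacle will be the final step: showing that the induced $G$-action of $\star$ on $A(\Sigma(M))\cong T^*M$ is the cotangent lift of $G\times M\to M$. For this I would take an $\s$-path $\gamma(s)$ at $1_m$ with $\dot\gamma(0)=\alpha\in T^*_mM$, set $g=\exp(t\xi)$, and differentiate the source-preserving companion action $g\odot\gamma(s):=(g\star\gamma(s)^{-1})^{-1}$ from Proposition \ref{prop:general}. Because $J(1_m)=e$, the dressing-twist $g^{J(\gamma(s))^{-1}}$ agrees with $g$ up to higher-order corrections in $s$, so the computation reduces to differentiating the inner conjugation by the local bisection $m\mapsto\Psi(g,m)$ of $\Sigma(M)$. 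Using that $\Psi$ integrates $\psi(\xi,m)=(\mu^*\xi^R)_m$ together with the momentum-map relation $\psi(\xi)=\pi^\sharp(\mu^*\xi^R)$, this inner-conjugation derivative delivers the cotangent lift of $\psi(\xi)\in\X(M)$ to $T^*M$, matching the infinitesimal action of Theorem \ref{thm:lift:actions}. Proposition \ref{prop:general} then forces the two $G$-actions to coincide, establishing \eqref{eq:action:inner}.
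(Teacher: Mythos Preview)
Your proposal is correct and follows essentially the same approach as the paper: verify that the twisted-inner formula defines a twisted multiplicative $G$-action whose induced action on $A(\Sigma(M))\cong T^*M$ is the cotangent lift, and then invoke the uniqueness clause of Proposition~\ref{prop:general}. The paper's proof simply asserts these verifications in one sentence each, while you have spelled out the details.
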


\begin{proof}
One checks that formula \eqref{eq:action:inner} defines a
$G$-action on $\Sigma(M)$ which is twisted multiplicative (relative
to $J:\Sigma(M)\to G^*$). The corresponding
$G$-action induced on the Lie algebroid $A=T^*M$ is just the
cotangent lifted action of $G\times M\to M$. By Proposition
\ref{prop:general}, it follows that \eqref{eq:action:inner} must
coincide with the lifted $G$-action on $\Sigma(M)$.
\end{proof}

Let us assume now that the hamiltonian action $G\times M\to M$ is proper and free. Then the quotient
$M//G:=\mu^{-1}(e)/G$ is a Poisson submanifold of $M/G$. Is this Poisson submanifold integrable? Which
symplectic groupoid integrates it? The following result gives an answer to these questions:

\begin{thm}
Let $G\times M\to M$ be a proper and free hamiltonian action with momentum map $\mu:M\to G^*$ and assume that
the Lie algebroid morphism $\psi:\gg\ltimes M\to T^*M$ given by \eqref{eq:momentum:ham} integrates to a groupoid
morphism $\Psi:G\ltimes M\to \Sigma(M)$. Then, there exists a hamiltonian action of $G\times G$ on $\Sigma(M)$
which is proper and free, and the symplectic quotient,
\[ \Sigma(M)//G\times G:=J^{-1}(e)|_{\mu^{-1}(e)}/G\times G \subset \Sigma(M)//G, \]
is a symplectic subgroupoid integrating the Poisson
submanifold $M//G\subset M/G$.
\end{thm}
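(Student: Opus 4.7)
The plan is to realize the desired symplectic groupoid as the Lu--Weinstein reduction of $\Sigma(M)$ at the identity of $G^*\times G^*$, for a suitable hamiltonian $G\times G$-action built out of the groupoid morphism $\Psi$. First, I would decouple the ``twisted inner'' formula \eqref{eq:action:inner} into commuting left and right $G$-actions on $\Sigma(M)$ given by
\[
g\cdot_L x := \Psi(g,\t(x))\cdot x, \qquad h\cdot_R x := x\cdot \Psi(h,\s(x))^{-1},
\]
and combine them into $(g,h)\cdot x := \Psi(g,\t(x))\cdot x\cdot\Psi(h,\s(x))^{-1}$. That each is a $G$-action, and that the two commute, follows from the morphism property of $\Psi$ (giving $\Psi(g_1,g_2\cdot m)\cdot\Psi(g_2,m)=\Psi(g_1g_2,m)$) and associativity of the groupoid product. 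Under this action $\t((g,h)\cdot x)=g\cdot\t(x)$ and $\s((g,h)\cdot x)=h\cdot\s(x)$, so the first factor moves targets while the second moves sources.

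Next I would check that this action is Poisson and hamiltonian, with momentum map into $G^*\times G^*$ given by $(\mu\circ\t,\mu\circ\s)$ (with the appropriate convention on the source factor, reflecting that $\s$ is anti-Poisson). The infinitesimal generator of $\cdot_L$ for $\xi\in\gg$ is the right-invariant vector field on $\Sigma(M)$ attached to the section $\psi(\xi,\cdot)=\mu^*\xi^R$ of $T^*M=A(\Sigma(M))$; such right-invariant vector fields are hamiltonian for functions pulled back by $\t$, which matches the Poisson-Lie infinitesimal momentum condition \eqref{eq:momentum} with $\mu\circ\t$, and an analogous argument handles $\cdot_R$ with $\s$. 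Equivariance of the momentum is inherited from equivariance of $\mu$. Properness and freeness of the $G\times G$-action follow from the hypothesis that $G\times M\to M$ is proper and free, using that $(\t,\s):\Sigma(M)\to M\times M$ is a submersion equivariant for the product action. Applying the Lu--Weinstein theorem, the zero level set is
\[
(\mu\circ\t,\mu\circ\s)^{-1}(e,e)=\{x\in\Sigma(M):\t(x),\s(x)\in\mu^{-1}(e)\}=J^{-1}(e)|_{\mu^{-1}(e)},
\]
where the last equality uses the explicit formula $J(x)=\mu(\t(x))\mu(\s(x))^{-1}$, and the reduction yields a symplectic form on $J^{-1}(e)|_{\mu^{-1}(e)}/(G\times G)$.

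Finally I would put the Lie groupoid structure over $M//G=\mu^{-1}(e)/G$ on this quotient: source and target descend because the two $G$-factors move source and target independently; for the product, given classes $[x],[y]$ with $[\s(x)]=[\t(y)]$ one chooses representatives with $\s(x)=\t(y)$ (possible by freeness) and sets $[x]\cdot[y]:=[x\cdot y]$, well-definedness following from the bi-equivariance of the action. Multiplicativity of the reduced symplectic form is inherited from that of $\Omega$ via the standard graph-of-multiplication coisotropic argument, and the natural map into $\Sigma(M)//G=J^{-1}(e)/G$ realizes this as a symplectic subgroupoid. One then identifies the Lie algebroid of the quotient with $T^*(M//G)$, confirming that it integrates the Poisson submanifold $M//G\subset M/G$.

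The main obstacle is the last step: verifying that all of the Lie-groupoid and symplectic data descend correctly to the $G\times G$-quotient, in particular the smoothness and well-definedness of the reduced multiplication, and the identification of the resulting Lie algebroid with $T^*(M//G)$ rather than with a larger Lie algebroid of the sort one gets for an arbitrary Poisson submanifold (cf.~\cite{CrFe2}). Everything else is a routine application of the machinery already developed in this paper together with Lu's reduction theorem.
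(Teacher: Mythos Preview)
Your overall strategy coincides with the paper's: build a $G\times G$-hamiltonian action on $\Sigma(M)$ out of the integrated morphism $\Psi$, reduce at the identity of $G^*\times G^*$, and verify that the quotient inherits a symplectic groupoid structure over $M//G$. The difference lies in the action formula. You use the \emph{untwisted} action
\[
(g,h)\cdot x=\Psi(g,\t(x))\cdot x\cdot\Psi(h,\s(x))^{-1},
\]
which factors neatly into commuting left and right $G$-actions, whereas the paper uses the \emph{dressing-twisted} action
\[
(g_1,g_2)\cdot x=\Psi(g_1,\t(x))\cdot x\cdot\Psi\bigl(g_2^{\,J(x)},\s(x)\bigr)^{-1},
\]
with momentum map $x\mapsto(\mu(\t(x)),\mu(\s(x))^{-1})$. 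The two actions agree on the level set $J^{-1}(e)|_{\mu^{-1}(e)}$ (since there $J(x)=e$ and the twist is trivial), so the reductions coincide. The paper's choice has the feature that its restriction to the diagonal is exactly the lifted Poisson $G$-action of Theorem~\ref{thm:lift:actions}, which makes the inclusion into $\Sigma(M)//G$ immediate; your choice is easier to check directly as a bona fide $G\times G$-action and lets you argue hamiltonian-ness factor by factor via right/left-invariant vector fields. Your caveat about the ``appropriate convention on the source factor'' is exactly the point encoded by the paper's inversion $\mu(\s(x))^{-1}$: since $\s$ is anti-Poisson, the second component of your momentum map is only Poisson after composing with inversion on $G^*$, and the corresponding Poisson-Lie structure on the second copy of $G$ must be adjusted accordingly. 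With that understood, your argument goes through and yields the same result.
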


\begin{proof}
We will only sketch a proof of this result. Further details will
be available in \cite{FP}.  First one defines an action of
$G\times G$ on $\Sigma(M)$ by setting:
\[ (g_1,g_2)\,x=\Psi(g_1,\t(x))\cdot x\cdot \Psi(g_2^{J(x)},\s(x))^{-1}. \]
This action is hamiltonian, with momentum map:
\[ \bar{\mu}:\Sigma(M)\to G^*\times G^*,\quad x\mapsto (\mu(\t(x)),\mu(\s(x))^{-1}).\]
Observe that the restriction of this action to the diagonal of $G\times G$ yields the lifted $G$-action.

Next one checks that the $(G\times G)$-action on $\Sigma(M)$ is proper, free, and that $(e,e)\in G^*\times G^*$
is a regular value of the momentum map. Hence, we have the symplectic quotient:
\[ \Sigma(M)//G\times G=\bar{\mu}^{-1}((e,e))/G\times G=J^{-1}(e)|_{\mu^{-1}(e)}/G\times G.\]
Note that $J^{-1}(e)|_{\mu^{-1}(e)}\subset \Sigma(M)$ is a Lie subgroupoid. One can show that its product
structure descends to the quotient $\Sigma(M)//G\times G$, so that this is a symplectic groupoid. Finally, to
complete the proof, one verifies that its source (respectively, target) map is a Poisson (respectively,
anti-Poisson) map.
\end{proof}

\begin{rem}
The quotient $M//G$ is still defined when $e\in G^*$ is a regular value of $\mu:M\to G^*$ and the action
on the level set $\mu^{-1}(e)/G$ is proper and free. In this case, one can check that the groupoid given
in the proposition stills gives an integration of $M//G$. However, now $M/G$ need not be a smooth manifold
and it may not make sense to speak of the groupoid $\Sigma(M)//G$.
\end{rem}

For a compact Poisson-Lie group $G$, a result of Ginzburg and
Weinstein \cite{GiWe} states that there is a Poisson
diffeomorphism $e:\gg^*\to G^*$. Moreover, for hamiltonian actions
of compact Poisson-Lie groups we have the following result of
Alekseev \cite{Alek}:

\begin{thm}
Let $G$ be a  1-connected, simple, compact Poisson-Lie group and let $G\times M\to M$ be a Poisson action with
momentum map $\mu:M\to G^*$. There is a Poisson structure on $M$, gauge equivalent to the original one, such
that $G$ acts by Poisson diffeomorphisms with momentum map $e^{-1}\circ\mu:M\to\gg^*$.
\end{thm}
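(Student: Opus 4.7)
The plan is to lift the entire situation to the symplectic groupoid $\Sigma(M)$, carry out the change by a multiplicative gauge transformation there, and project back to $M$. First, apply Theorem \ref{thm:lift:actions} to the hamiltonian Poisson-Lie action to obtain a hamiltonian action of $G$ on $(\Sigma(M),\Omega)$; its momentum map is $J:\Sigma(M)\to G^*$, $J(x)=\mu(\t(x))\mu(\s(x))^{-1}$, and it is a Poisson groupoid morphism. Next, invoke the Ginzburg--Weinstein theorem to produce a Poisson diffeomorphism $e:(\gg^*,\pi_{\mathrm{lin}})\to(G^*,\pi_{G^*})$ and set $\tilde\mu:=e^{-1}\circ\mu$, which is a Poisson map to $(\gg^*,\pi_{\mathrm{lin}})$. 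The candidate new momentum map at the groupoid level is $\tilde J:\Sigma(M)\to\gg^*$ defined by $\tilde J(x):=\tilde\mu(\t(x))-\tilde\mu(\s(x))$, an additive groupoid morphism.

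The core of the proof is the construction of a closed 2-form $B\in\Omega^2(M)$, of the form $B=\mu^*\beta$ for a suitable 2-form $\beta$ on $G^*$, with two properties: (i) the endomorphism $\mathrm{Id}+B^\flat\circ\pi^\sharp$ of $T^*M$ is invertible, so that the gauge transformation by $B$ produces a bona fide new Poisson structure $\tilde\pi$ on $M$; and (ii) the modified 2-form $\Omega':=\Omega+\t^*B-\s^*B$ is a multiplicative symplectic form on $\Sigma(M)$ making the lifted $G$-action hamiltonian with momentum map $\tilde J$. The form $\beta$ is assembled from a refined Ginzburg--Weinstein construction: $e$ can be realised as the time-1 flow of an explicit time-dependent vector field on $\gg^*$, and $\beta$ is obtained by integrating, along this flow, a 1-form that measures the discrepancy between the right-invariant 1-forms $\xi^R$ on $G^*$ and the constant 1-forms $\xi$ on $\gg^*$. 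The assumptions that $G$ be 1-connected, simple, and compact enter precisely here, through the vanishing of the relevant Lie algebra cohomology of $\gg$, which guarantees the existence of the required primitive.

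Once $B$ is in hand, the conclusion is automatic via Proposition \ref{prop:main:aux} applied with the \emph{trivial} Poisson structure on $G$: since $\tilde J$ is a groupoid morphism into the abelian group $(\gg^*,+)$ and the corresponding dressing action of $\gg^*$ on $G$ is trivial, the twisted multiplicativity property \eqref{eq:twisted} collapses to ordinary multiplicativity, so the lifted action makes $G$ act by groupoid automorphisms on $(\Sigma(M),\Omega')$. By the general correspondence between Poisson actions on integrable Poisson manifolds and twisted multiplicative hamiltonian actions on source 1-connected symplectic groupoids, this descends to a Poisson action of $G$ (with zero Poisson-Lie structure) by Poisson diffeomorphisms on $(M,\tilde\pi)$, with equivariant momentum map $\tilde\mu=e^{-1}\circ\mu$. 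The main obstacle is the construction of $\beta$ in the middle step; the remaining verifications (multiplicativity and non-degeneracy of $\Omega'$ and the momentum map equation) are essentially formal once $\beta$ is available.
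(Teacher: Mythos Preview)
The paper does not prove this statement; it is quoted, with attribution to Alekseev \cite{Alek}, as a known result, which the paper then reinterprets at the level of the symplectic groupoid in the theorem immediately following. There is therefore no proof in the paper to compare against.

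As a standalone argument, your proposal has a real gap at exactly the point you yourself flag as ``the main obstacle'': the construction of $\beta$. What you offer --- ``assembled from a refined Ginzburg--Weinstein construction \ldots by integrating, along this flow, a 1-form that measures the discrepancy between the right-invariant 1-forms $\xi^R$ on $G^*$ and the constant 1-forms $\xi$ on $\gg^*$'' --- is a description of the role $\beta$ should play, not a construction. Concretely, for the lifted action to be hamiltonian on $(\Sigma(M),\Omega')$ with momentum $\tilde J$ one needs
\[
i_{\psi(\xi)}(\t^*B-\s^*B)=\d\langle\tilde J,\xi\rangle-J^*\xi^R\qquad(\xi\in\gg);
\]
pulled back through $\mu$ this becomes a system of equations on $G^*$ whose solution --- together with the verification that it produces a \emph{closed} $\beta$ with $\mathrm{Id}+B^\flat\circ\pi^\sharp$ invertible --- \emph{is} the content of Alekseev's argument. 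The groupoid repackaging is conceptually pleasant but does not bypass this analytic core: once the existence of such a $\beta$ is granted, the remaining steps are indeed formal, yet granting it is precisely what has to be proved. Note also that your route passes through $\Sigma(M)$ and hence, as written, establishes the statement only for integrable $(M,\pi)$, whereas Alekseev's theorem carries no such hypothesis.
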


Our description of the lifted $G$-action allows us to explain this
result at the level of the symplectic groupoid. Let us denote by
$\pi$ the original Poisson structure on $M$ and by $\tilde{\pi}$
the gauge equivalent Poisson structure. This means that $\pi$ and
$\tilde{\pi}$ have the same symplectic leaves and that there is a
closed 2-form $B\in\Omega^2(M)$ such that the symplectic
structures on a leaf $S$ differ by the pullback of $B$ to $S$ (see
\cite{BuRa}). It follows (\cite[Theorem 4.1]{BuRa}) that the
symplectic groupoids of $(M,\pi)$ and $(M,\tilde{\pi})$ have the
same groupoid structure, while the symplectic forms are related
by:
\[ \tilde{\Omega}=\Omega+\t^*B-\s^*B.\]
Our next result describes the lifted $G$-action on the symplectic
groupoid $(\Sigma(M),\tilde{\Omega})$:

\begin{thm}
For a 1-connected, simple, compact Poisson-Lie group $G$ acting on
an integrable Poisson manifold $(M,\pi)$, the $G$-action on
$(M,\tilde{\pi})$ lifts to a hamiltonian $G$-action on
$(\Sigma(M),\tilde{\Omega})$ with momentum map
$e^{-1}\circ\mu\circ\t-e^{-1}\circ\mu\circ\s:\Sigma(M)\to\gg^*$.
Moreover, this action is inner and is explicitly given by:
\[ g\,x=\Psi(g,\t(x))\cdot x\cdot \Psi(g,\s(x))^{-1}.\]
\end{thm}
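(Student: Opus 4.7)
The strategy is to reduce this theorem to the untwisted case $\pi_G=0$ of Theorem \ref{thm:lift:actions} by invoking Alekseev's gauge equivalence. First I would observe that Alekseev's theorem makes the same set-theoretic action $G\times M\to M$ into a Poisson action of the \emph{trivial} Poisson-Lie group $(G,0)$ on $(M,\tilde\pi)$, with equivariant momentum map $\tilde\mu:=e^{-1}\circ\mu:M\to\gg^*$. Combined with the fact recalled just before the statement that $(\Sigma(M),\tilde\Omega)$ is the source 1-connected symplectic groupoid of $(M,\tilde\pi)$, this puts us squarely in the framework of Theorem \ref{thm:lift:actions} with $\pi_G=0$.

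Next I would apply that theorem. Since $\pi_G=0$, the dual group $G^*$ is the abelian group $\gg^*$, which is trivially complete. The theorem produces a hamiltonian lift of the $G$-action to $(\Sigma(M),\tilde\Omega)$ whose momentum map integrates the Lie bialgebroid morphism $T^*M\to\gg^*$ attached to the action on $(M,\tilde\pi)$. Because the action is hamiltonian in the ordinary sense (with momentum map valued in the abelian group $\gg^*$), the integrated momentum map is exact and given by the Xu formula of Subsection \ref{subsec:int:hamiltonian}, namely $\tilde J(x)=\tilde\mu(\t(x))-\tilde\mu(\s(x))$, which is exactly the claimed expression.

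For the innerness statement, I would invoke the proposition on twisted inner actions from Subsection \ref{subsec:int:hamiltonian}. Its hypothesis is automatically satisfied here: since $G$ is 1-connected, the action groupoid $G\ltimes M$ is source 1-connected, so the Lie algebroid morphism $\psi:\gg\ltimes M\to T^*M$ defined by \eqref{eq:momentum:ham} (using $\tilde\mu$) automatically integrates to a groupoid morphism $\Psi:G\ltimes M\to\Sigma(M)$. The proposition then gives the twisted inner formula
\[
g\,x=\Psi(g,\t(x))\cdot x\cdot \Psi(g^{\tilde J(x)},\s(x))^{-1}.
\]
Since $\pi_G=0$ forces the right dressing action of $G^*=\gg^*$ on $G$ to be trivial, the twist collapses ($g^{\tilde J(x)}=g$) and we recover precisely the inner formula of the statement.

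The main subtlety I anticipate is the compatibility check underlying the first step: one has to verify that feeding the gauge transformed Poisson manifold $(M,\tilde\pi)$ into the machinery produces a hamiltonian action on $(\Sigma(M),\tilde\Omega)$, and not on some a priori distinct symplectic groupoid, and moreover that the induced action on $T^*M$ is still the cotangent lift of $G\times M\to M$. This boils down to the naturality of the Weinstein construction under gauge equivalence, i.e.\ to \cite[Theorem 4.1]{BuRa} applied to $\tilde\Omega=\Omega+\t^*B-\s^*B$; once this compatibility is in hand, both the momentum map formula and the inner description follow cleanly from the steps above.
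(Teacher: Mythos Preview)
Your proposal is correct and is precisely the argument the paper has in mind: the paper's own proof reads only ``the proof is more or less straightforward; we refer to [FP]'', and the straightforward route is exactly the reduction to the $\pi_G=0$ case of Theorem~\ref{thm:lift:actions} via Alekseev's theorem, followed by Xu's exact momentum formula and the twisted inner action proposition of Subsection~\ref{subsec:int:hamiltonian} with the twist collapsing. Your observation that $G$ being 1-connected makes $G\ltimes M$ source 1-connected, so that the integrability hypothesis of the inner action proposition is automatic, is the one nontrivial ingredient the paper does not make explicit.
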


The proof is more or less straightforward. We refer to \cite{FP} for more details.

%%%%%%%%%%%%%%%%%%%%%%%%%%%%%%%%%%%%%%%%%%%%%%%%%%%
\subsection{Poisson homogeneous spaces}           %
\label{subsec:Poisson:homogeneous:spaces}         %
%%%%%%%%%%%%%%%%%%%%%%%%%%%%%%%%%%%%%%%%%%%%%%%%%%%

Let $(G,\pi _G)$ be a Poisson-Lie group. We say that a Poisson manifold $(M,\pi )$ is a \emph{Poisson
homogeneous space} if there exists a Poisson action $G\times M\to M$ which is transitive (see \cite{D}). In this
section, we will give a simple description of the symplectic groupoid $\Sigma (M)$, in the case where the
Poisson structure vanishes at some point, by applying Theorem \ref{thm:lift:actions}.

Let $m_0\in M$ be the point where the Poisson structure vanishes. We identify $M$ with $G/H$, where $H=G_{m_0}$
is the isotropy group at $m_0$. Under this identification, the Poisson action of $G$ on $G/H$ is by left
translations and the projection $q:G\to G/H\cong M$ becomes a Poisson map. Then $H$ is a closed subgroup of $G$
with Lie algebra $\hh$ such that its annihilator $\hh^\perp$ is a Lie subalgebra of $\gg ^*$, that is, $H$ is a
coisotropic subgroup of $G$. In the sequel we will assume that $H$ is connected.

Our final aim is to describe the symplectic groupoid integrating the Poisson
manifold $G/H$ as some kind of quotient. This will be possible under the following
completeness assumption (compare with \cite{BCST}):

\begin{defn}
Let $G$ be a Poisson-Lie group and let $H\subset G$ be a closed,
connected, coisotropic subgroup with Lie algebra $\hh\subset\gg$.
We say that the pair $(G,H)$ is \emph{relatively complete} if
the annihilator $\hh^\perp\subset\gg^*$ integrates to a closed
subgroup $H^\perp\subset G^*$ and the right dressing action
$\gg^*\to \X(G)$ restricted to $\hh^\perp$ integrates to an action
of $H^\perp$ on $H$.
\end{defn}

As above, we let $\Sigma(G)$ be the symplectic groupoid of the Poisson manifold $G$ and we let $J:\Sigma(G)\to
G^*$ be the momentum map for the local action of $G$ on $\Sigma(G)$ obtained by lifting the action of $G$ on
itself by left translations (recall that we are not assuming that $G$ is complete, so this is only a local
action). We have:

\begin{thm}
\label{thm:Poisson:hom}
Let $G$ be a Poisson-Lie group and let $H\subset G$ be a
closed, connected, coisotropic subgroup such that the pair $(G,H)$
is relatively complete. Then $(J^{-1}(H^\perp )/H,\Omega _{\red})$
is a symplectic groupoid integrating the Poisson homogeneous space
$G/H$.
\end{thm}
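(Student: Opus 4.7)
The overall strategy parallels the proof of Theorem \ref{thm:int:M/G}, but since we are not assuming $G$ is complete, we must work with the restricted $H$-action on a subgroupoid rather than with a global $G$-action on all of $\Sigma(G)$. First I would observe that since $J:\Sigma(G)\to G^*$ is a Poisson groupoid morphism and $H^\perp\subset G^*$ is a closed Lie subgroup (by the first half of the relative completeness hypothesis), the preimage $J^{-1}(H^\perp)$ is a closed Lie subgroupoid of $\Sigma(G)$, automatically source-connected over $G$. The plan is to build a global $H$-action on $J^{-1}(H^\perp)$, check it is proper, free, and twisted multiplicative in a way that descends to a groupoid product on the quotient, and finally identify the result as an integration of $G/H$.

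The first key step is to globalize the local lifted $H$-action coming from Proposition \ref{prop:local:act}. Given $h\in H$ and $x\in J^{-1}(H^\perp)$, formula \eqref{eq:action} reads
\[
h\,[a(t)]=[h^{J(x)J(x(t))^{-1}}\,a(t)],
\]
so I need to show the right dressing action $h^{u(t)}$ is globally defined along the path $u(t):=J(x)J(x(t))^{-1}$. Since $J^{-1}(H^\perp)$ is a subgroupoid, I can choose the representative $\s$-path $x(t)$ to lie in $J^{-1}(H^\perp)$; then $J(x(t))\in H^\perp$ and hence $u(t)\in H^\perp$ for all $t$, so by the second half of the relative completeness hypothesis the right dressing action of $H^\perp$ on $H$ makes $h^{u(t)}\in H$ well-defined throughout. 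Independence of the representative path is inherited from the local action. One must also verify that this action preserves $J^{-1}(H^\perp)$, which follows from the equivariance $J(h\,x)=h\cdot J(x)$ in Theorem \ref{thm:lift:actions}(1) together with the fact that the coisotropy of $H$ implies the left dressing action of $H$ on $G^*$ preserves $H^\perp$.

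Next I would verify that the restricted $H$-action on $J^{-1}(H^\perp)$ is proper and free: this follows as in Proposition \ref{lift:actions:proper} from the $H$-equivariance of $\t:\Sigma(G)\to G$ together with the fact that $H$ acts on $G$ freely and properly by left translations. With these in hand, the twisted multiplicativity from Theorem \ref{thm:lift:actions}(2) gives
\[
h\,(x\cdot y)=(h\,x)\cdot(h^{J(x)}\,y),\qquad x,y\in J^{-1}(H^\perp)^{(2)},\ h\in H,
\]
where now $h^{J(x)}\in H$ precisely because $J(x)\in H^\perp$. Consequently the product $[x]\cdot[y]:=[x\cdot y]$ is well-defined on $H$-orbits, endowing $J^{-1}(H^\perp)/H$ with a Lie groupoid structure over $G/H$. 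Applying the Lu--Weinstein reduction theorem (cf.\ Section \ref{subsec:hamiltonian}) to the hamiltonian $H$-action with momentum map $J|_{J^{-1}(H^\perp)}$ produces a reduced symplectic form $\Omega_{\red}$ on the quotient; the multiplicativity of $\Omega$ on $\Sigma(G)$ descends to multiplicativity of $\Omega_{\red}$ by a diagram chase.

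Finally, to identify $(J^{-1}(H^\perp)/H,\Omega_{\red})$ as a symplectic groupoid of $G/H$, I would check that the source map to $G/H$ (induced from $\s:\Sigma(G)\to G$ composed with the projection $q:G\to G/H$) is Poisson, equivalently that the Lie algebroid of $J^{-1}(H^\perp)/H$ is $T^*(G/H)$. Infinitesimally, $A(J^{-1}(H^\perp))=j^{-1}(\hh^\perp)\subset T^*G$, and quotienting by the $H$-action yields $j^{-1}(\hh^\perp)/H$; using that $q:G\to G/H$ is a Poisson submersion because $H$ is coisotropic, this Lie algebroid is naturally identified with $T^*(G/H)$. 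The main obstacle I anticipate is the first globalization step: ensuring that one can consistently choose paths inside $J^{-1}(H^\perp)$ and that the resulting formula genuinely defines an action (associativity along products), where the interaction between the two components of relative completeness — the integration of $H^\perp$ and its global action on $H$ — is essential.
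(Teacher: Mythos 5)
Your proposal follows essentially the same route as the paper: restrict to the Lie subgroupoid $J^{-1}(H^\perp)$, use relative completeness to make formula \eqref{eq:action} a global $H$-action there (choosing representative paths inside $J^{-1}(H^\perp)$ so that only the dressing action of $H^\perp$ on $H$ is needed), descend the groupoid structure via the twisted multiplicativity property, and reduce the symplectic form. The only difference is cosmetic and lies in the last step: you invoke the Lu--Weinstein reduction theorem for the subgroup level $H^\perp$ (whereas the theorem as stated in Section \ref{subsec:hamiltonian} is for point levels $\mu^{-1}(u)/G_u$), while the paper obtains $\Omega_{\red}$ by noting that $J^{-1}(H^\perp)$ is coisotropic in $\Sigma(G)$ --- since $J$ is a Poisson submersion and $H^\perp\subset G^*$ is coisotropic --- and performing coisotropic reduction along the $H$-orbits; the two mechanisms coincide here.
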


\begin{proof}
The action by left translations of $G$ on itself is free, so the lifted action of $G$ on $\Sigma(G)$ is also
free and $J:\Sigma(G)\to G$ is a submersion. By assumption, $H^\perp\subset G^*$ is a closed subgroup, so it
follows that $J^{-1}(H^\perp)\subset\Sigma(G)$ is a Lie subgroupoid. Moreover, using that $J$ is a Poisson
submersion and that $H^\perp$ is coisotropic, so is $J^{-1}(H^\perp)$.

By our relative completeness assumption, the action of $H$ on $G$
by left translations can be integrated to a \emph{global action}
of $H$ on $J^{-1}(H^\perp)\subset\Sigma(G)$. This follows from the
explicit formula \eqref{eq:action}, which only uses the dressing
action of $H^\perp$ on $H$.

Now the twisted multiplicativity property \eqref{eq:twisted:2} shows that the quotient $J^{-1}(H^\perp)/H$
inherits a groupoid structure over $G/H$. This quotient is also a symplectic manifold, and the source
(respectively, target) map is a Poisson (respectively, anti-Poisson) map, so we conclude that it is a symplectic
groupoid integrating $G/H$.
\end{proof}

\begin{ex}(see also \cite[Remark 5.13]{Lu3})
Let us consider the case where $G$ is a complete Poisson-Lie group. Then the symplectic groupoid $\Sigma(G)$ is
isomorphic to the transformation groupoid $G^*\times G$, associated with the left dressing action of $G^*$ on
$G$, denoted by  $(u,g)\mapsto {}^ug$. The lift to the symplectic groupoid $\Sigma(G)$ of the action of $G$ on
itself by left translations, is given by:
\[ g\cdot (u,h)=(^gu,g^uh).\]
The momentum map $J:\Sigma(G)\to G^*$ is the projection on the factor $G^*$ and Theorem \ref{thm:Poisson:hom}
says that:
\[ J^{-1}(H^\perp )/H =(H^\perp \times G)/H,\]
is a symplectic groupoid integrating $G/H$.
\end{ex}

We do not know whether Poisson homogeneous spaces of the form $G/H$ are always integrable. Observe that $G/H$ is
integrable whenever the map $q:G\to G/H$ is a complete Poisson map. This follows from a criteria for
integrability due to Crainic and Fernandes (see \cite[Theorem 8]{CrFe2}) which states that a Poisson manifold is
integrable if and only if it admits a complete symplectic realization. For example, when $H$ is compact, the
quotient map is complete so $G/H$ is integrable, However, in general, even when $G/H$ is integrable, the pair
$(G,H)$ may fail to be relatively complete, so to construct the symplectic groupoid of $G/H$ one requires more
complicated procedures that the one given in Theorem \ref{thm:Poisson:hom}.

% -----------------------------------------------------------------------

% -----------------------------------------------------------------------
\end{document}